\documentclass[10pt]{amsart}
\usepackage{fullpage}
\usepackage{graphics}
\usepackage{amsmath,amsfonts,amssymb,amsthm,verbatim,shuffle}
\usepackage{bbold}

%%%%%%%%%%%%%%%%%%%%%%%%%%%%%%%%%%%%%%%%%%%%%%%%%%%%%%%%%%%%%%%%%%%%%%%%%
\pagestyle{plain}                                                      %%
%%%%%%%%%% EXACT 1in MARGINS %%%%%%%                                   %%
\setlength{\textwidth}{6.5in}     %%                                   %%
\setlength{\oddsidemargin}{0in}   %% (It is recommended that you       %%
\setlength{\evensidemargin}{0in}  %%  not change these parameters,     %%
\setlength{\textheight}{8.5in}    %%  at the risk of having your       %%
\setlength{\topmargin}{0in}       %%  proposal dismissed on the basis  %%
\setlength{\headheight}{0in}      %%  of incorrect formatting!!!)      %%
\setlength{\headsep}{0in}         %%                                   %%
\setlength{\footskip}{.5in}       %%                                   %%
%%%%%%%%%%%%%%%%%%%%%%%%%%%%%%%%%%%%                                   %%
                    %%
%\renewcommand{\refname}{References Cited}                   %%
\bibliographystyle{plain}
\usepackage[nobysame]{amsrefs}
\usepackage[usesnames,svgnames]{xcolor}
\usepackage{tikz}
\usepackage[enableskew,vcentermath]{youngtab}
\usepackage{ytableau,mathdots,yhmath}
\usepackage[all]{xy}
\usepackage{enumerate}
\usepackage[colorinlistoftodos]{todonotes}
%% Theorems
\newtheorem{theorem}{Theorem}[section]
\newtheorem{lemma}[theorem]{Lemma}
\newtheorem{conjecture}[theorem]{Conjecture}
\newtheorem{question}[theorem]{Question}
\newtheorem{proposition}[theorem]{Proposition}

\newtheorem{definition}[theorem]{Definition}
\newtheorem{remark}[theorem]{Remark}
\newtheorem{example}[theorem]{Example}

%% Commands

\newcommand{\inv}[1]{\text{inv}(#1)}

\DeclareMathOperator{\FQSym}{\mathrm{FQSym}}
\DeclareMathOperator{\QSym}{\mathrm{QSym}}

\newcommand\mapsfrom{\mathrel{\reflectbox{\ensuremath{\longmapsto}}}}

\newcommand{\Inv}{{\mathrm{Inv}}}

\setlength{\unitlength}{0.06em}
\newlength{\cellsize} \setlength{\cellsize}{18\unitlength}
\newsavebox{\cell}
\sbox{\cell}{\begin{picture}(18,18)
\put(0,0){\line(1,0){18}}
\put(0,0){\line(0,1){18}}
\put(18,0){\line(0,1){18}}
\put(0,18){\line(1,0){18}}
\end{picture}}
\newcommand\cellify[1]{\def\thearg{#1}\def\nothing{}%
\ifx\thearg\nothing
\vrule width0pt height\cellsize depth0pt\else
\hbox to 0pt{\usebox{\cell} \hss}\fi%
\vbox to \cellsize{
\vss
\hbox to \cellsize{\hss$#1$\hss}
\vss}}
\newcommand\tableau[1]{\vtop{\let\\\cr
\baselineskip -16000pt \lineskiplimit 16000pt \lineskip 0pt
\ialign{&\cellify{##}\cr#1\crcr}}}

  %%%for multi-line sums

\newcommand{\pic}{\begin{tikzpicture}}
\newcommand{\epic}{\end{tikzpicture}}

\newcommand{\R}{\mathbb{R}}
\newcommand{\Q}{\mathbb{Q}}

\newcommand{\Z}{\mathbb{Z}}

\newcommand{\Des}{\mathrm{Des}}

\newcommand{\QSYM}{\mbox{QSYM}}

\newcommand{\bubble}{\times}

\newcommand{\one}{\mathbb{1}}

\newcommand{\Alt}{\mathrm{Alt}}
\newcommand{\comp}{\mathrm{comp}}
\newcommand{\interlace}{\mathrm{lace}}

\newcommand{\HHH}{\mathcal{H}}

\newcommand{\ASM}{\mathrm{ASM}}
\newcommand{\MT}{\mathrm{MT}}
\newcommand{\symm}{\mathfrak{S}}

\title{Weak order and descents for monotone triangles}

\author{Zachary Hamaker}
\address{Department of Mathematics\\
University of Michigan\\
Ann Arbor, MI 48109}
\email{zachary.hamaker@gmail.com}

\author{Victor Reiner}
\address{School of Mathematics\\
University of Minnesota\\
Minneapolis, MN 55455}
\email{reiner@umn.edu}

\subjclass{05Axx, 05E45}
\keywords{Alternating sign matrix, monotone triangle, weak order, descents, shelling, Boolean algebra, Hecke monoid, Bruhat order, MacNeille completion}

% Enable SageTeX to run SageMath code right inside this LaTeX file.
% documentation: http://mirrors.ctan.org/macros/latex/contrib/sagetex/sagetexpackage.pdf
% \usepackage{sagetex}

\begin{document}

\abstract{
Monotone triangles are a rich extension of permutations that biject with alternating sign matrices.
The notions of weak order and descent sets for permutations are generalized here to monotone triangles, and shown to enjoy many analogous properties.  It is shown that any linear extension of the weak order gives rise to a shelling
order on a poset, recently introduced by Terwilliger, whose maximal chains biject with monotone triangles;  among
these shellings are a family of EL-shellings.

The weak order turns out to encode an action of the $0$-Hecke monoid of type $A$ on the monotone triangles,
generalizing the usual bubble-sorting action on permutations.
It also leads to a notion of descent set for monotone triangles, having another natural property:
the surjective algebra map from the Malvenuto-Reutenauer Hopf algebra of permutations into quasisymmetric functions 
extends in a natural way to an algebra map out of the recently-defined Cheballah-Giraudo-Maurice algebra of alternating sign matrices.}
}

\maketitle

%\tableofcontents

\section{Introduction}
\label{s:intro}

Permutations in the symmetric group $\symm_n$
on $n$ letters, when thought of as $n \times n$ permutation matrices,
are special cases of fascinating objects known as
{\it alternating sign matrices} ({\it ASMs} ).   The latter have been intensely studied since their introduction by Mills, Robbins and Rumsey \cite{mills1983alternating}, and turn out to be connected with such areas as statistical mechanics, representation theory, and number theory--
see Bressoud~\cite{bressoud1999proofs} and Brubaker, Bump and Friedberg \cite{BrubakerBumpFriedberg}
for more history and context.  We recall their definition here,
as well as their bijection with the equivalent objects known as {\it monotone triangles}.

A vector in $\{0,\pm 1\}^n$ is called {\it alternating} if
its $\pm 1$ values alternate in sign, beginning and ending with $+1$.  Denote by
$\Alt_n$ the set of all such alternating vectors of length $n$.
An $n \times n$ \emph{alternating sign matrix} is one whose
row and column vectors all lie in $\Alt_n$.  Denote by $\ASM_n$ the set of all such
matrices.  For example, we depict here on the left a matrix $A$ in $\ASM_6$, 
abbreviating $"+"$ and $-$ for entries $+1$ and $-1$:
\begin{equation}
\label{eq:asm}
\left[
\begin{smallmatrix}
0 &+ & 0 & 0 & 0 & 0 \\
0 & 0 & 0 &+ & 0 & 0 \\
+&- &+ &- & 0 &+ \\
0 & 0 & 0 &+ & 0 & 0 \\
0 &+ & 0 &- &+ & 0 \\
0 & 0 & 0 & + &0 & 0 
\end{smallmatrix}
\right]=A
\qquad \qquad \leftrightarrow \qquad\qquad
T=\xymatrix@R=.01in@C=.01in{
*\txt<5pc>{2\\2 4\\1 3 6\\1 3 4 6\\1 2 3 5 6}}
\end{equation}
There is a simple bijection between $\ASM_n$ and the set $\MT_n$ of
{\it monotone triangles of size $n$}.  A monotone triangle of size $n$ is a
sequence $T=(T_0,T_1,\ldots,T_{n-1},T_n)$ of subsets of
$[n]:=\{1,2,\ldots,n\}$ where $\#T_m=m$, with the extra property that 
$T_{m+1}$ {\it interlaces} $T_m$ in this sense:  if  one list entries of $T_m, T_{m+1}$ in increasing order as
$$
\begin{array}{rcll}
T_m&=&\{i_1 < i_2 < \cdots < i_m\}, \\
T_{m+1}&=&\{j_1 < j_2 < \cdots < j_m < j_{m+1}\}, 
\end{array}
$$
then one has
\begin{equation}
\label{classical-interlacing-definition}
j_1 \leq i_1 \leq j_2 \leq i_2 \leq j_3 \leq \cdots \leq j_m \leq i_m \leq j_{m+1}.
\end{equation}
One depicts $T$ as a triangular array having $T_m$ as its $m^{th}$ row
from the top, omitting $T_0=\varnothing, T_n=[n]$. For example, 
$T=( \varnothing, \{2\}, \{2,4\}, \{1,3,6\} , \{1,3,4,6\} , \{1,2,3,5,6\} , [6] ) \in \MT_6$ is shown on the right  in \eqref{eq:asm}.
For the sake of defining the bijections $\ASM_n \leftrightarrow \MT_n$,
first introduce the {\it indicator vector} $\one_S$ in 
$\{0,1\}^n$ for a subset $S \subseteq [n]$, having coordinates $(\one_{S})_{i}=1$ for $i \in S$ and $(\one_{S})_{i}=0$ for $i \not\in S$.  Then 
given $A$ in $\ASM_n$, one maps $A \mapsto T=(T_0,\ldots,T_n)$ in $\MT_n$ whose $m^{th}$ row 
$T_m$ is the unique subset for which $\one_{T_m}$ is the sum of the first $m$ rows of $A$.  The inverse bijection sends $T \mapsto A$
where the $m^{th}$ row of $A$ is $\one_{T_{m}}-\one_{T_{m-1}}$. For example, the matrix $A$ in $\ASM_6$ shown on the left in \eqref{eq:asm} above has corresponding monotone triangle $T$ in $\MT_6$ shown to its right.

It is not hard to check (see Terwilliger~\cite[Thm. 3.2]{terwilliger2017poset}) that 
an $(m+1)$-subset $J\subset [n]$ interlaces an $m$-set $I \subset [n]$ if and only if the difference of the indicator vectors
$\one_J-\one_I$ lies in $\Alt_n$.  Thus $\MT_n$ is in bijection with the maximal chains of a partial order on the subsets of  $[n]$ that is the transitive
closure of the relation $I < J$ when $J$ interlaces $I$;  Terwilliger denotes this partial order $\Phi_n$.
Note that this partial order $\Phi_n$ is stronger than the usual {\it Boolean algebra poset $2^{[n]}$}, whose order relation is given
by inclusion $\subseteq$, and whose maximal chains are the monotone triangles of the form 
$
T(w):=(\varnothing, \{w_1\},\{w_1,w_2\},\ldots,\{w_1,w_2,\ldots,w_{n-1}\},[n]),
$
which correspond to the permutations $w=(w_1,w_2,\ldots,w_n)$ in $\symm_n$.  This monotone
triangle $T(w)$ also corresponds to the usual permutation matrix of $w^{-1}$, thinking of permutation matrices as a subset of $\ASM_n$.
The Hasse diagram for the poset $\Phi_3$ on subsets of $[3]$ is shown below, with solid edges indicating the weaker Boolean algebra $2^{[3]}$ ordering, and the unique extra order relation $\{2\} < \{1,3\}$ from $\Phi_3$ shown dotted: 
$$
\xymatrix@R=.1in@C=.02in{
       & \{1,2,3\} & \\
\{1,2\}\ar@{-}[ur]& \{1,3\}\ar@{-}[u]& \{2,3\}\ar@{-}[ul]\\
\{1\}\ar@{-}[ur]\ar@{-}[u]& \{2\}\ar@{-}[ur]\ar@{.}[u]\ar@{-}[ul]& \{3\}\ar@{-}[ul]\ar@{-}[u]\\
     &\varnothing\ar@{-}[ur]\ar@{-}[u]\ar@{-}[ul]&
}
$$
Section~\ref{s:interlacing} explores properties of the order $\Phi_n$, including characterizing it via
a generalization of interlacing.  

One of our original goals was to show that $\Phi_n$ is a {\it shellable} poset, a notion that we review
here.  Say that an abstract simplicial complex $\Delta$ is {\it pure} if all of its {\it facets} (=inclusion-maximal simplices)
have the same number of vertices.  In this case, say that an
ordering $F_1,F_2,\ldots $ of the facets of $\Delta$ is a (pure) {\it shelling} if for every $j \geq 2$, the 
intersection of the boundary of $F_j$ with the subcomplex generated by the facets $F_1,\ldots,F_{j-1}$ forms a pure subcomplex
of codimension one within the boundary of $F_j$; said differently, for any pair $1 \leq i<j$ , there exists $k<j$ such
that $F_i \cap F_j \subseteq F_k \cap F_j$ with $\# F_k \cap F_j=\#F_j-1$.  Having a shelling for $\Delta$ imposes strong topological properties for its {\it geometric realization} $\Vert \Delta \Vert$, and strong algebraic properties for its {\it Stanley-Reisner ring} $k[\Delta]$; see Bj\"orner \cite[Appendix]{bjorner1980shellable} and \cite[\S 1]{bjorner1984some}.  
Here we are starting with a partially ordered set $P$ having both a bottom element $\hat{0}$ and top element $\hat{1}$, such as the {\it Boolean algebra $2^{[n]}$} with inclusion order on subsets of $[n]$, or the order $\Phi_n$ on subsets, where in either case, $\hat{0}=\varnothing$ and $\hat{1}=[n]$.  In this setting, one often removes the bottom and top elements, and associates an abstract simplicial complex called the {\it order complex} to its {\it proper part},  so that 
$\Delta$ has vertex set $P \setminus \{\hat{0},\hat{1}\}$, and simplices
for each totally ordered subset of $P \setminus \{\hat{0},\hat{1}\}$. This means that facets of $\Delta$ biject with maximal chains of $P$.  

As mentioned above, for $P=\Phi_n$ and its subposet the Boolean algebra $2^{[n]}$, these facets or maximal chains are naturally labeled by the monotone triangles $\MT_n$ and permutations $\symm_n$, respectively.  We illustrate this here for $n=3$, depicting the order complex $\Delta(\Phi_3 \setminus \{\hat{0},\hat{1}\})$, with one extra facet (edge) shown
dotted, whose removal gives the subcomplex $\Delta(2^{[3]} \setminus \{\hat{0},\hat{1}\})$.
\begin{equation}
\label{simplicial-complexes-for-n=3}
\xymatrix@R=.1in@C=.5in{
        & \{3\}\ar@{-}^{\txt<2pc>{ 3 \\2 3}}[r]& \{2,3\}\ar@{-}^{\txt<2pc>{ 2 \\2 3}}[ddr]& \\
 & &  &\\
\{1,3\}\ar@{.}^{\txt<2pc>{ 2 \\1 3}}[rrr] \ar@{-}^{\txt<2pc>{ 3 \\1 3}}[uur]\ar@{-}_{\txt<2pc>{ 1 \\1 3}}[ddr]&        &        &\{2\}\\
 & & & \\
        & \{1\}\ar@{-}_{\txt<2pc>{ 1 \\1 2}}[r]  & \{1,2\}\ar@{-}_{\txt<2pc>{ 2 \\1 2}}[uur]&
}
\end{equation}
For the Boolean algebra $2^{[n]}$, this order complex $\Delta(2^{[n]} \setminus \{\hat{0},\hat{1}\})$ is isomorphic to the {\it Coxeter 
complex} of type $A_{n-1}$, and a result of Bj\"orner \cite[Thm. 2.1]{bjorner1984some} shows that it is shellable, 
with a shelling order on its facets provided by
any linear ordering on the permutations $\symm_n$ that extends the {\it (right) weak order} $<_W$.  This weak order
is the transitive closure of the relation
in which $w s_i < _W w$ if $w=(w_1,\ldots,w_n)$ has $w_i > w_{i+1}$, where $s_i=(i,i+1)$ is an adjacent transposition.
One can view this weak order as induced from the action of the {\it bubble-sorting operators} $\pi_1,\ldots,\pi_{n-1}$
on $\symm_n$
\begin{equation}
\label{bubble-sorting-operators}
w \bubble \pi_i=\begin{cases}
ws_i & \text{ if } w_i > w_{i+1},\\
w & \text{ if } w_i < w_{i+1},
\end{cases}
\end{equation}
which satisfy the relations of the {\it 0-Hecke monoid} of type $A_{n-1}$:
\begin{equation}
\label{zero-Hecke-relations}
\begin{array}{rcl}
\pi_i \pi_j &=& \pi_j \pi_i \text{ if } |j-i|\geq 2,\\
\pi_i \pi_{i+1} \pi_i &=& \pi_{i+1} \pi_i \pi_{i+1},\\
\pi_i^2&=&\pi_i. 
\end{array}
\end{equation}
Note that $\pi_i$ acts on right. 
This notational choice highlights the relationship between the application of $\pi_i$ and multiplication on the right by $s_i$.
One may then define the (right) weak order by $w \leq_W w'$ if and only if $w$ lies in the $0$-Hecke orbit of $w'$.

Section~\ref{s:zero-Hecke} extends this $0$-Hecke action from $\symm_n$ to $\MT_n$, by letting $T \bubble \pi_i$ replace
the $i^{th}$-row of the monotone triangle $T$ with the componentwise smallest row that still forms a monotone triangle with the remaining rows.
One can then extend the weak order $<_W$ from $\symm_n$ to $\MT_n$ by setting $T \leq T'$ whenever
$T$ lies in the $0$-Hecke orbit of $T'$.    For $n=3$, these actions of $H_3(0)$ on $\symm_3$ and $\MT_3$ look as follows,
illustrating the weak order posets $<_W$ on both:
$$
\xymatrix@R=.3in@C=.2in{ 
                                &*\txt<2pc>{ 3 \\2 3}\ar_{\pi_2}[dl]\ar^{\pi_1}[dr]& \\
*\txt<2pc>{ 3 \\1 3}\ar_{\pi_1}[d]&*\txt<2pc>{ 2 \\1 3}\ar@{.>}^{\pi_1}[dl]\ar@{.>}_{\pi_2}[dr] &*\txt<2pc>{ 2 \\2 3}\ar^{\pi_2}[d] \\
*\txt<2pc>{ 1 \\1 3}\ar_{\pi_2}[dr]& &*\txt<2pc>{ 2 \\1 2}\ar^{\pi_1}[dl] \\
 &*\txt<2pc>{ 1 \\1 2}&  
} 
$$

Section~\ref{s:weak-order-shelling} then uses this to prove our first main result.

\begin{theorem}
\label{thm:weak-order-gives-shellings}
Linear extensions of $<_W$ on $\MT_n$ give shelling orders on $\Phi_n$.
\end{theorem}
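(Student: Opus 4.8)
The plan is to verify, for the order complex $\Delta := \Delta(\Phi_n \setminus \{\hat 0, \hat 1\})$, the combinatorial shelling criterion recalled above: for each pair $i<j$ produce $k<j$ with $F_i \cap F_j \subseteq F_k \cap F_j$ and $\#(F_k \cap F_j) = \#F_j - 1$. The facets of $\Delta$ are the maximal chains of $\Phi_n$, i.e. the monotone triangles. For a monotone triangle $S=(S_0,\ldots,S_n)$ write $F_S := \{S_1,\ldots,S_{n-1}\}$ for its facet, whose vertices are the proper nonempty rows of $S$; since a rank-$m$ vertex can only coincide with a rank-$m$ vertex, for triangles $S,S'$ one has $F_S \cap F_{S'} = \{\,S_m : 1\le m\le n-1,\ S_m=S'_m\,\}$. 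Fix a linear extension of $<_W$ and list the facets accordingly. Given $i<j$, let $T,T'$ be the triangles with $F_i=F_T$, $F_j=F_{T'}$; since the ordering extends $<_W$ we have $T\ne T'$ and $T\not>_W T'$, hence $T'\not\le_W T$.

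The $0$-Hecke operators supply the required codimension-one faces. Call $m$ a \emph{descent} of $T'$ if $T'\bubble\pi_m\ne T'$. Then $T'\bubble\pi_m$ lies in the $0$-Hecke orbit of $T'$ and differs from it, so $T'\bubble\pi_m <_W T'$ and the facet $F_{T'\bubble\pi_m}$ precedes $F_{T'}$. As $\pi_m$ alters only the $m$th row, $T'\bubble\pi_m$ agrees with $T'$ in every row but the $m$th, where they differ, so $F_{T'\bubble\pi_m}\cap F_{T'} = F_{T'}\setminus\{T'_m\}$ has exactly $n-2$ vertices. Thus it suffices to find a descent $m$ of $T'$ with $T_m\ne T'_m$: then $T'_m\notin F_T$, so $F_T\cap F_{T'}\subseteq F_{T'}\setminus\{T'_m\} = F_{T'\bubble\pi_m}\cap F_{T'}$, and taking $k$ to be the position of $F_{T'\bubble\pi_m}$ finishes the pair $(i,j)$. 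So the theorem reduces to the claim that if $T'_m=T_m$ for every descent $m$ of $T'$, then $T'\le_W T$; since $T'\not\le_W T$, this forces the desired descent to exist.

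To prove the claim I would start from $T$ and repeatedly apply the operators $\pi_m$ with $m$ ranging over the \emph{non}-descent indices of $T'$. Each such application weakly decreases the triangle in $<_W$ and never touches a descent row of $T'$, so by the hypothesis those rows stay equal to $T'$ throughout; as $\MT_n$ is finite the process stabilizes at a triangle $U\le_W T$ fixed by every $\pi_m$ with $m\notin\Des(T')$ and with $U_m=T'_m$ for $m\in\Des(T')$. It then remains to identify $U$ with $T'$. Both $U$ and $T'$ agree on $\Des(T')\cup\{0,n\}$, and on each maximal run $\{a+1,\ldots,b-1\}$ of consecutive non-descent indices (bounded by $a,b\in\Des(T')\cup\{0,n\}$, where $U$ and $T'$ coincide) each of $U,T'$ has every interior row equal to the componentwise smallest row compatible with its two neighbors. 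Hence both restrict to a locally minimal filling of rows $a+1,\ldots,b-1$ with the same fixed boundary rows $U_a=T'_a$ and $U_b=T'_b$; proving such a filling is unique yields $U=T'$, and therefore $T'=U\le_W T$.

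The main obstacle is exactly this uniqueness, which is a lattice-type property of interlacing: among all monotone fillings of the rows between two fixed rows $R_a,R_b$ there is a componentwise minimum, and the only filling in which every interior row is simultaneously minimal relative to its neighbors is that global minimum. I expect this to follow from the componentwise monotonicity of the operation ``smallest row compatible with given neighbors,'' via an induction that locates the topmost row where a putative second fixed point exceeds the global minimum and produces a strictly smaller admissible choice there. Carrying this out, together with confirming that $T'\bubble\pi_m$ is genuinely the minimal-row replacement (so descents behave as stated), are the technical points that draw on the interlacing analysis of Section~\ref{s:interlacing}.
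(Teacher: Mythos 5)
Your argument is correct and follows essentially the same route as the paper: the reduction of the shelling condition to finding a descent $m$ of the later triangle at a row where the two triangles differ is identical, and your contradiction (iterating the $\pi_m$ to a stable $U$ and matching it with $T'$ via uniqueness of locally minimal fillings between fixed rows) is a re-derivation of the paper's Proposition~\ref{Hecke-action-versus-minimum-triangles}. The ``main obstacle'' you flag is exactly Lemma~\ref{minimal-trapezoid-lemma} (the equivalence of local minimality with the explicit formula and with componentwise minimality), which is already established in Section~\ref{s:interlacing}, so nothing further is needed.
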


There is another sense in which the terminology {\it weak order} is appropriate.
Lascoux and Sch\"utzenberger ~\cite{lascoux1996treillis} showed that the 
componentwise order on $MT_n$ is a {\it distributive lattice}, one that turns out to be the {\it MacNeille completion} of the {\it (strong) Bruhat order} $<_B$ on $\symm_n$; 
we therefore refer to this componentwise order on $MT_n$ as its {\it (strong) Bruhat order} $<B$.   Depicted below is the the poset $(\MT_3,<_B)$, with the usual Bruhat order $(\symm_3,<_B)$ as a subposet, 
and dotted edges indicating the order relation to the unique element $T$ in $\MT_3 \setminus \symm_3$:
$$
\xymatrix@R=.1in@C=.2in{ 
                                &*\txt<2pc>{ 3 \\2 3}\ar@{-}[dl]\ar@{-}[dr]& \\
*\txt<2pc>{ 3 \\1 3}\ar@{.}[dr]&                                                                                           &*\txt<2pc>{ 2 \\2 3}\ar@{.}[dl] \\
                                                  &*\txt<2pc>{ 2 \\1 3}\ar@{.}[dl]\ar@{.}[dr] & \\
*\txt<2pc>{ 1 \\1 3}\ar@{-}[dr]& &*\txt<2pc>{ 2 \\1 2}\ar@{-}[dl] \\
 &*\txt<2pc>{ 1 \\1 2}&  
}
$$
It turns out (see Remark~\ref{why-called-weak-remark}) that this Bruhat order $<_B$ on $\MT_n$ is stronger than the weak order $<_W$ defined above;  in particular,
any linear extension of the componentwise order gives rise to a shelling of $\Phi_n$.  

The weak order shellings provided by Theorem~\ref{thm:weak-order-gives-shellings} have another
tight analogy to the weak order shellings of the Boolean posets $(2^{[n]}, \subseteq)$, in that they contain as a special case certain
{\it EL-shellings}, a notion which we recall here.  
Given a poset $P$, with $C(P)=\{x \lessdot y: x,y \in P\}$ its set of {\it cover relations} ($x \lessdot y$ means $x <y$ but $\not\exists z$ with $x<z<y$),
an \emph{EL-labeling} of $P$ is a function $\lambda:C(P) \to \Lambda$ where $(\Lambda,<_{\Lambda})$ is any poset,
having these properties:
\begin{itemize}
\item for every interval $[x,y] \subset P$, there is a unique maximal chain $(x = x_0 \lessdot x_1 \lessdot \dots \lessdot x_k = y)$, that has {\it weakly rising} labels 
$$
\lambda(x_{0},x_{1}) \leq_{\Lambda} \lambda(x_{1},x_{2}) \leq_{\Lambda} \cdots  \leq_{\Lambda} \lambda(x_{k-1},x_{x_k})
$$
\item if $x \lessdot z < y$, with $z \neq x_1$, then $\lambda(x,x_1) <_{\Lambda} \lambda(x,z)$. 
\end{itemize}
For example, the Boolean algebras $(2^{[n]}, \subseteq)$ have a very simple EL-labeling.
It assigns a covering relation between subsets $I \subset J$ with $\#J=\#I+1$ the
unique integer $\lambda(I,J):=j$ such that $J=I \cup \{j\}$; here the labels come from the poset
$\Lambda=\{1,2,\ldots,n\}$ with the usual ordering on integers.
A poset is \emph{EL-shellable} or \emph{lexicographically shellable} if it admits an EL-labeling.
Bj\"orner \cite[Thm. 2.3]{bjorner1980shellable} showed that for a poset with an EL-labeling, one obtains a shelling order on
its maximal chains via any linear extension of the lexicographic extension of $\Lambda$ to sequences of edge labels.
In Section~\ref{s:EL}, we prove the following.

\begin{theorem}
\label{t:asm_el}
There is a partial order on $\Alt_n$ so that the edge-labeling $\lambda$ which assigns 
$\lambda(I \lessdot J) = \one_J - \one_I$ in $\Alt_n$ becomes an EL-labeling of $\Phi_n$.
Furthermore, any of the EL-shelling orders associated with this labeling will be a linear order on $\MT_n$ that extends the weak order $<_W$. 
\end{theorem}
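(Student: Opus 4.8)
The plan is to take for $<_\Lambda$ the \emph{reverse prefix-sum dominance order}: to $a\in\Alt_n$ associate the prefix-sum vector $\sigma(a)\in\{0,1\}^n$ with $\sigma(a)_j=\sum_{k\le j}a_k$, and declare $a\le_\Lambda b$ precisely when $\sigma(a)_j\ge\sigma(b)_j$ for all $j$. Since $a\mapsto\sigma(a)$ is a bijection from $\Alt_n$ onto the $0/1$-vectors with last coordinate $1$, this is a genuine partial order, and on the singleton labels it restricts to $\one_{\{1\}}<_\Lambda\cdots<_\Lambda\one_{\{n\}}$, recovering the Boolean EL-labeling. First I would record the basic dictionary. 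Every cover $I\lessdot J$ of $\Phi_n$ satisfies $\#J=\#I+1$ and $\one_J-\one_I\in\Alt_n$, so $\lambda$ takes values in $\Alt_n$; and writing $c_S(j):=\#(S\cap[j])$, a pair $S\lessdot S'$ interlaces iff $c_{S'}(j)-c_S(j)\in\{0,1\}$ for every $j$, in which case $\sigma(\lambda(S\lessdot S'))_j=c_{S'}(j)-c_S(j)$. I also note that moving up in $\Phi_n$ never decreases any $c_S(j)$, so $I\le K$ in $\Phi_n$ forces $c_I(j)\le c_K(j)$ for all $j$.

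The crux is a reformulation of \emph{weakly rising}. For a maximal chain $I=S_a\lessdot S_{a+1}\lessdot\cdots\lessdot S_b=K$ of an interval $[I,K]$, the labels are $<_\Lambda$-rising iff $\sigma(\lambda_m)\ge\sigma(\lambda_{m+1})$ for all $m$, i.e.\ $c_{S_m}(j)-c_{S_{m-1}}(j)\ge c_{S_{m+1}}(j)-c_{S_m}(j)$ for every $j$ and $m$; equivalently, for each fixed $j$ the integer sequence $m\mapsto c_{S_m}(j)$ is concave. A concave sequence with increments in $\{0,1\}$ and prescribed endpoints $c_I(j),c_K(j)$ is unique, its increments being $1,\dots,1,0,\dots,0$, which forces $c_{S_m}(j)=\min\bigl(c_I(j)+(m-a),\,c_K(j)\bigr)$. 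As the numbers $c_{S_m}(j)$ determine $S_m$, there is at most one rising chain. For existence I would check that these $S_m$ are legitimate: using $c_I(j)\le c_K(j)$ and the fact that the pointwise minimum of two nondecreasing integer functions with steps in $\{0,1\}$ is again nondecreasing with steps in $\{0,1\}$, each $c_{S_m}(\cdot)$ is the count function of an honest subset, consecutive subsets interlace, and $S_a=I$, $S_b=K$. Checking that the forced concave profile is realized by a genuine interlacing chain is the main obstacle, but it reduces to this elementary monotonicity bookkeeping.

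The lexicographic-first axiom follows from the same computation. Let $z\ne S_{a+1}$ be any atom of $[I,K]$. Since $z\le K$ in $\Phi_n$ gives $c_z(j)\le c_K(j)$, the $m=a+1$ case of the formula yields $\sigma(\lambda(I,z))_j=c_z(j)-c_I(j)\le\one[\,c_K(j)>c_I(j)\,]=\sigma(\lambda(I,S_{a+1}))_j$ for all $j$, which is exactly $\lambda(I,S_{a+1})\le_\Lambda\lambda(I,z)$; and if the two $\sigma$-vectors agreed then $c_z=c_{S_{a+1}}$, forcing $z=S_{a+1}$, so the inequality is strict. Hence $\lambda$ is an EL-labeling of $\Phi_n$.

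For the final claim, it is enough to show that whenever $T=T'\bubble\pi_i\ne T'$ the row sequence of $T$ is lexicographically below that of $T'$ with respect to $<_\Lambda$; for then, since $<_W$ is the transitive closure of these relations and lexicographic comparison in any fixed linear extension of $\Lambda$ is a total order, every such extension (hence every associated EL-shelling order) ranks $T$ before $T'$ throughout $<_W$. By definition $\pi_i$ alters only the level-$i$ set, replacing $T'_i$ by the componentwise-smaller $T_i$, so the two row sequences first differ in position $i$. There $\sigma(\lambda_i(T))_j-\sigma(\lambda_i(T'))_j=c_{T_i}(j)-c_{T'_i}(j)\ge0$, because $T_i\le T'_i$ componentwise is equivalent to $c_{T_i}(j)\ge c_{T'_i}(j)$ for all $j$, and the inequality is strict for some $j$ as $T_i\ne T'_i$. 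Thus $\lambda_i(T)<_\Lambda\lambda_i(T')$, giving the desired comparison; and the resulting orders are genuinely linear on $\MT_n$ since distinct monotone triangles have distinct row sequences.
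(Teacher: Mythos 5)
Your proof is correct, and your label poset is actually the same as the paper's: the paper pulls back inclusion on $2^{[n-1]}$ via tail sums, $S(v)=\{i: v\cdot\one_{[i+1,n]}=+1\}=\{i:\sigma(v)_i=0\}$, so $S(v)\subseteq S(w)$ is precisely your condition $\sigma(v)_j\geq\sigma(w)_j$ for all $j$. The overall architecture also matches: the unique rising chain is the componentwise-minimal $(I,K)$-trapezoid (your count functions $c_{S_m}(j)=\min\bigl(c_I(j)+(m-a),c_K(j)\bigr)$ are exactly those of the chain built from $H_{\min}$ in Lemma~\ref{minimal-trapezoid-lemma}(c)), and the weak-order claim is settled, as in the paper, by comparing label sequences at the first position where $T$ and $T\bubble\pi_i$ differ. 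Where you genuinely diverge is in verifying the first EL axiom. The paper reduces to rank-two intervals via Lemma~\ref{minimal-trapezoid-lemma}(b) and then proves an element-by-element characterization of $H_{\min}(I,J)$ in terms of $\leq_{EL}$ (Lemma~\ref{EL-characterization-of-Hmin}), whose forward direction needs a somewhat delicate induction. You instead observe that ``weakly rising'' is equivalent to concavity of $m\mapsto c_{S_m}(j)$ for each fixed $j$, which forces the whole chain in one stroke and works uniformly on arbitrary intervals; this buys a shorter, more conceptual uniqueness argument, at the cost of having to confirm that the forced count functions assemble into a genuine interlacing chain. That bookkeeping does go through, though you should state the one piece not literally covered by ``$c_I(j)\le c_K(j)$'': the endpoint identity $S_b=K$ needs $c_K(j)-c_I(j)\le \#K-\#I$, which follows because each cover step raises $c(\cdot)(j)$ by at most one while raising cardinality by exactly one. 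The second EL axiom and the extension of $<_W$ are handled essentially as in the paper (your atom computation is the content of Lemma~\ref{l:min_bottom}).
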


The weak order shellings and EL-shellings in 
Theorems~\ref{thm:weak-order-gives-shellings}, \ref{t:asm_el} show that $\Phi_n$ is a {\it Cohen-Macaulay} poset, and allow one to combinatorially re-interpret
its {\it flag $f$-vector} $f(\Phi_n):=(f_J)_{J \subset [n-1]}$;  here 
$f_J$ is the number of chains in $\Phi_n$ 
that pass through the ranks in $J$.  
One can instead consider the {\it flag $h$-vector} $h(\Phi_n)=(f_J)_{J \subset [n-1]}$, 
defined by an inclusion-exclusion
relation:
$$
\begin{aligned}
f_J&=\sum_{ I \subseteq  J } h_I, \quad \text{ or equivalently, }\\
h_J&=\sum_{ J \subseteq I } (-1)^{\#J \setminus I } f_I.
\end{aligned}
$$
General shelling theory then implies this combinatorial interpretation for $h_J$:
$$
h_J(\Phi_n)=\# \{ T \in \MT_n: \Des(T)=J \}.
$$
Here one is led to define the {\it descent set} $\Des(T)$ for a monotone triangle $T$ as follows via the following generalization of the usual descent set $\Des(w)=\{i \in [n-1]: w_i > w_{i+1} ,\text{ that is, } w \bubble \pi_i \neq w\}$ for permutations $w$ in $\symm_n$:
$$
\Des(T):=\{i \in [n-1]: T \bubble \pi_i \neq T\}.
$$
Section~\ref{s:descents} discusses this 
descent set $\Des(T)$, and collects some data on its distribution over $\MT_n$.

There is a further way in which this notion of a descent set for monotone triangles extends a pleasant property of descents for permutations.  Recall that Malvenuto and Reutenauer \cite{malvenuto1995duality} defined a graded Hopf algebra, sometimes denoted $\FQSym=\bigoplus_{n \geq 0} \FQSym_n$, where  $\FQSym_n$ has $\Z$-basis elements $\textbf{w}$ indexed by permutations $w$ in $\symm_n$.  The ring structure is determined by a {\it shuffle product} for $u,v$ in $\symm_n,\symm_m$
defined as
$$
\textbf{u} \textbf{v} = \sum_{w \in u \shuffle v[n]} \textbf{w}
$$
in which the sum runs over all shuffles $w$ of $u=(u_1,\ldots,u_n)$, and
$v[n]=(v_1+n,\ldots,v_m+n)$.  This shuffle product was introduced in such a 
way as to make a ring (and Hopf algebra) morphism into
the  {\it quasisymmetric functions} $\QSym$, defined by
\begin{equation}
\label{FQsym-to-QSym-map} 
\begin{array}{rcl}
\mathcal{\FQSym} &\longrightarrow& \QSym\\
\textbf{w} &\longmapsto& L_{\alpha(\Des(w))}.
\end{array}
\end{equation}
Here $L_{\alpha}$ denotes Gessel's {\it fundamental quasisymmetric function} 
 associated to a composition $\alpha$, and  $\alpha(\Des(w))$ is the composition whose partial sums give
 the elements of $\Des(w)$; see \cite[\S7.19]{Stanley-EC2} and Section~\ref{s:qsym} below.  Recently, Cheballah, Giraudo and Maurice embedded $\FQSym$ inside a larger 
graded Hopf algebra $\mathcal{ASM}$ whose $n^{th}$-graded component has a 
basis $\{\textbf{A}\}$ indexed by $A$ in $\ASM_n$~\cite{cheballah2015hopf},
and whose product and coproduct extend that of $\FQSym$.  Section~\ref{s:qsym} proves the following.

\begin{theorem}
\label{t:alg_map}
The map $\FQSym \rightarrow \QSym$ in \eqref{FQsym-to-QSym-map} 
extends to an algebra (but not a coalgebra) morphism 
$$
\begin{array}{rcl}
\mathcal{ASM} &\longrightarrow& \QSym\\
\textbf{A} &\longmapsto& L_{\alpha(\Des(A))}
\end{array}
$$ 
where $\Des(A)=\Des(T(A))$ for an alternating sign matrix $A$ is the descent set of its monotone triangle $T(A)$.
\end{theorem}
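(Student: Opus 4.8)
The plan is to check that the $\Z$-linear map $\Phi\colon\mathcal{ASM}\to\QSym$, $\textbf{A}\mapsto L_{\alpha(\Des(A))}$, is an algebra morphism by verifying multiplicativity on the basis; it is linear by definition and sends the unit to $L_{\alpha(\varnothing)}=1$. Since the Cheballah--Giraudo--Maurice product restricts on permutation matrices to the shuffle product of $\FQSym$, and $\Phi$ restricts there to the known morphism \eqref{FQsym-to-QSym-map}, all of the new content lies in the interaction of the descent set $\Des(C)=\Des(T(C))$ with the product. Writing $\textbf{A}\cdot\textbf{B}=\sum_C\textbf{C}$ in $\mathcal{ASM}$ for $A\in\ASM_n$ and $B\in\ASM_m$, the goal becomes the quasisymmetric identity
\[
L_{\alpha(\Des(A))}\cdot L_{\alpha(\Des(B))}=\sum_{C}L_{\alpha(\Des(C))}.
\]

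For the left-hand side I would use the fundamental-quasisymmetric product rule $L_{\alpha(\Des(u))}\cdot L_{\alpha(\Des(v))}=\sum_{w\in u\shuffle v[n]}L_{\alpha(\Des(w))}$, valid for any $u\in\symm_n$ and $v\in\symm_m$; this is exactly the assertion that \eqref{FQsym-to-QSym-map} is a morphism, and the multiset $\{\Des(w):w\in u\shuffle v[n]\}$ depends only on $\Des(u)$ and $\Des(v)$. Choosing $u,v$ with $\Des(u)=\Des(A)$ and $\Des(v)=\Des(B)$ shows both sides ought to be sums of $\binom{n+m}{n}$ fundamentals counted with multiplicity, which guides the search for a bijection. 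I would then realize the product combinatorially as a shuffle of the monotone triangles $T(A)$ and $T(B)[n]$, the latter obtained by adding $n$ to every entry of $B$: an interleaving is a word $\sigma$ in $\{A,B\}$ with $n$ letters $A$ and $m$ letters $B$, and its triangle $C_\sigma$ has as its $k$-th row the disjoint union of the length-$a$ prefix of the rows of $T(A)$ and the length-$b$ prefix of those of $T(B)[n]$, where $a+b=k$ counts the letters of each kind among the first $k$. Because every entry coming from $A$ is at most $n$ while every entry of $T(B)[n]$ exceeds $n$, these unions are automatically disjoint and interlacing is inherited rowwise from whichever triangle supplied the new element; thus each of the $\binom{n+m}{n}$ interleavings yields a valid maximal chain in $\Phi_{n+m}$, matching the index set of $u\shuffle v[n]$.

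The main obstacle is the descent-compatibility lemma $\Des(C_\sigma)=\Des(w_\sigma)$, where $w_\sigma\in u\shuffle v[n]$ is the word obtained from the same interleaving $\sigma$; granting it, the displayed identity holds term by term and the theorem follows. To prove it I would first convert $\Des(T)=\{i:T\bubble\pi_i\neq T\}$ into a local condition on the consecutive rows $T_{i-1},T_i,T_{i+1}$, using the description of $\bubble\pi_i$ from Section~\ref{s:zero-Hecke}. Interior positions of $C_\sigma$ lying entirely among $A$-steps or entirely among $B$-steps should inherit their descent status from $A$ or from $B$ (the latter unchanged under the shift $T(B)\mapsto T(B)[n]$, since the $0$-Hecke comparisons are translation invariant), exactly as the internal descents of $w_\sigma$ come from $u$ and $v[n]$. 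The delicate case is a boundary position where an $A$-step and a $B$-step meet: there the interlacing constraints of $\Phi_{n+m}$ couple the two triangles, but the shift by $n$ decouples the relevant comparison, so that a descent occurs precisely at a $B$-to-$A$ transition, mirroring the fact that in $w_\sigma$ a letter of $v[n]$ (value $>n$) immediately followed by a letter of $u$ (value $\le n$) is a descent while the reverse is an ascent. Carrying out this boundary analysis is the crux of the argument.

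Finally, only the algebra structure needs checking: the parenthetical ``(but not a coalgebra)'' is a statement rather than an obligation, and a single non-permutation example suffices to exhibit $(\Phi\otimes\Phi)\circ\Delta\neq\Delta_{\QSym}\circ\Phi$, so no coproduct verification enters the proof.
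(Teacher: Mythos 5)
Your proposal follows essentially the same route as the paper's proof: reduce to the quasisymmetric identity via the known $\FQSym\to\QSym$ morphism, describe the Cheballah--Giraudo--Maurice product on monotone triangles as row-interleavings of $T(A)$ and the shifted $T(B)$, and then do the four-case local analysis (both steps from $A$, both from $B$, $A$-then-$B$ giving no descent, $B$-then-$A$ giving a descent) to show the descent set of each shuffled triangle matches that of the correspondingly shuffled word. The paper phrases the key lemma as a multiset statement while you state the sharper term-by-term identity $\Des(C_\sigma)=\Des(w_\sigma)$, but the verification is identical; the only thing you omit is an explicit example witnessing the failure of the coalgebra property, which you correctly note is all that is required.
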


Sections~\ref{pathology-section}
concludes by comparing poset properties of the weak order on $\MT_n$ with
analogous properties for the weak order on $\symm_n$, including a conjecture for
the homotopy type of open intervals in $(\MT_n,<_W)$.

\subsection*{Acknowledgements} 
The second author was partially supported by NSF grant DMS-1601961.
The authors thank Ilse Fischer, Darij Grinberg, John Harding, Brendon Rhoades, John Stembridge and Jessica Striker for helpful discussions, 
and thank Brendan Pawlowski for sharing his code to compute MacNeille completion of posets.
In addition, we are grateful to Roger Behrend for detailed feedback on an earlier draft leading to numerous improvements, including his illuminating example.
This work began during the Fall 2017 MSRI semester in
Geometric and Topological Combinatorics.

%%%%%%%%%%%%%%%%%%
\section{Interlacing, monotone trapezoids, and the order $\Phi_n$}
\label{s:interlacing}

The goal here is to relate 
Terwilliger's order $\Phi_n$ 
with the notions of interlacing and monotone trapezoids.

\begin{definition} \rm \ \\
Start with the {\it componentwise} order $<_{\comp}$ on subsets $I, I' \subset [n]$ of the same cardinality 
$k$ for $0 \leq k \leq n$,
$$
\begin{aligned}
I&=\{i_1 < i_2 < \cdots < i_k\},\\
I&'=\{i'_1 < i'_2 < \cdots <i'_k\},
\end{aligned}
$$
defined by setting $I \leq_{\comp} I'$ if $i_m \leq i'_m$ for $m=1,2,\ldots,k$.

For $J=\{j_1 < \cdots < j_\ell\} \subset [n]$ with $\#J= \ell \geq k=\#I$,
say that {\it $J$ interlaces $I$}, written $I \leq_{\interlace} J$,
if 
$$
\{j_1,j_2,\ldots,j_k\} \leq_{\comp} 
              I \leq_{\comp} \{j_{\ell-k+1},j_{\ell-k+2},\ldots,j_{\ell-1},j_\ell\}.
$$
\end{definition}

Note that when $\#J=k+1=\#I+1$, this condition $I \leq_{\interlace} J$ is the 
usual definition of $J$ interlacing $I$, as given in
\eqref{classical-interlacing-definition} earlier.
One then has the following proposition which is easily checked (or see \cite[\S 3]{terwilliger2017poset}).

\begin{proposition}
\label{interlace-covering-is-alternating-difference-prop}
If $\#J=\#I+1$, then $I \leq_{\interlace} J$ if and only if
$\one_J-\one_I$ lies in $\Alt_n \qed$.
\end{proposition}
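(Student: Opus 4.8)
The plan is to reformulate both sides of the claimed equivalence in terms of the ``running count'' difference of the two indicator vectors, and then match them up. Write $m=\#I$, so $\#J=m+1$, and set $v=\one_J-\one_I\in\{0,\pm1\}^n$. For $0\le k\le n$ define the prefix counts $a_k=\#(J\cap[k])$ and $b_k=\#(I\cap[k])$, so that $a_k-b_k=\sum_{\ell\le k}v_\ell$ is the $k$-th partial sum of $v$. Since $a_n-b_n=\#J-\#I=1$, the total sum of $v$ equals $1$.

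First I would record the elementary characterization of membership in $\Alt_n$ in terms of partial sums: a vector $v\in\{0,\pm1\}^n$ whose total sum is $1$ lies in $\Alt_n$ if and only if every partial sum $\sum_{\ell\le k}v_\ell$ lies in $\{0,1\}$. The forward direction is immediate by walking along the nonzero entries (a $+1$ raises the running sum to $1$, the following $-1$ returns it to $0$, and so on). For the converse, partial sums confined to $\{0,1\}$ force the first nonzero entry to be $+1$ and each subsequent nonzero entry to flip sign, while the total being $1$ forces the last nonzero entry to be $+1$; this is exactly the alternating pattern beginning and ending with $+1$ that defines $\Alt_n$. Thus the right-hand side of the proposition reduces to the single two-sided condition $b_k\le a_k\le b_k+1$ for all $k$.

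The core of the argument is then to show that this inequality on prefix counts is equivalent to the interlacing chain \eqref{classical-interlacing-definition}. I would split it into its two halves and translate each using the standard duality between counting functions and order statistics, namely $a_k\ge r\iff j_r\le k$ and $b_k\ge r\iff i_r\le k$. The inequality $a_k\ge b_k$ for all $k$ is equivalent to $j_r\le i_r$ for all $r\in[m]$, obtained by specializing $k=i_r$ in one direction and $r=b_k$ in the other. The inequality $a_k\le b_k+1$ for all $k$ is equivalent to $i_r\le j_{r+1}$ for all $r\in[m]$; here the nontrivial direction follows by supposing $i_r>j_{r+1}$ and specializing $k=j_{r+1}$, which produces a prefix-count gap of $2$ and contradicts $a_k\le b_k+1$. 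Interleaving these two families of inequalities yields precisely $j_1\le i_1\le j_2\le\cdots\le j_m\le i_m\le j_{m+1}$.

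The only delicate point is the bookkeeping in this last translation, that is, keeping the index $r$, the counting-function values $a_k,b_k$, and the test values of $k$ aligned so that each elementwise inequality corresponds to the intended prefix-count inequality; everything else is routine. Assembling the three reductions --- membership in $\Alt_n$ $\Leftrightarrow$ partial sums in $\{0,1\}$ $\Leftrightarrow$ the two inequality families $\Leftrightarrow$ the interlacing chain --- completes the proof.
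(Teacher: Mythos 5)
Your argument is correct and complete. The paper itself gives no proof of this proposition, dismissing it as ``easily checked'' with a pointer to Terwilliger, so there is nothing to compare against; your verification via partial sums is exactly the natural one. Both halves check out: the characterization of $\Alt_n$ as the vectors in $\{0,\pm 1\}^n$ with total sum $1$ and all partial sums in $\{0,1\}$ is right, and the translation of $b_k \leq a_k \leq b_k+1$ into the two families $j_r \leq i_r$ and $i_r \leq j_{r+1}$ via the duality $a_k \geq r \iff j_r \leq k$ is carried out with the indices correctly aligned, yielding precisely the interlacing chain \eqref{classical-interlacing-definition}.
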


One can also readily check that $\leq_{\interlace}$ is a partial order,
that is, $I \leq_{\interlace} J \leq_{\interlace} K$ implies $I \leq_{\interlace} K$.
This partial  order $\leq_{\interlace}$ is closely related to {\it monotone trapezoids} and 
Terwilliger's order $\Phi_n$, as we now explain.

\begin{definition} \rm \ \\
An {\it $(I,J)$-monotone trapezoid} 
is a sequence of subsets $T=(I_k,I_{k+1},\ldots,I_{\ell-1},I_\ell)$ of $\{1,2,\ldots\}$ with
\begin{itemize}
\item $I_k=I, I_\ell=J$,
\item $\#I_m=m$, and
\item $I_m \leq_{\interlace} I_{m+1}$ for $k\leq m< \ell$.  
\end{itemize}
In other words, an $(I,J)$-monotone trapezoid is a saturated chain in $\leq_{\interlace}$ from $I$ to $J$.
When $(I,J)=(\varnothing, [n])$, one calls $T$ a {\it monotone triangle} of
size $n$.  
\end{definition}

\begin{proposition}
\label{interlacing-equivalences-prop}
The following are equivalent for subsets $I, J \subseteq [n]$:
\begin{enumerate}
\item[(a)] There exists at least one $(I,J)$-monotone trapezoid.
\item[(b)] $I \leq_{\Phi_n} J$. 
\item[(c)] $I \leq_{\interlace} J$.
%\item[(d)] There\footnote{We won't need
%condition (d) much-- it only becomes relevant in 
%Subsection~\ref{monotone-triangle-space-section}
%below.} exist
%$\#I$-element subsets $I',I'' \subset J$ for
%which $I' \leq_{\comp} I \leq_{\comp} I''$.
\end{enumerate}
\end{proposition}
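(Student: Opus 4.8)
The plan is to establish the cycle (a) $\Rightarrow$ (c) $\Rightarrow$ (a) together with the essentially definitional equivalence (a) $\Leftrightarrow$ (b). For (a) $\Leftrightarrow$ (b) I would simply unwind definitions: $\Phi_n$ is the transitive closure of the covering relations $I \lessdot J$ given by classical interlacing with $\#J = \#I+1$ (the $\leq_{\interlace}$ relation in the cardinality-difference-one case). Hence $I \leq_{\Phi_n} J$ holds exactly when there is a sequence $I = I_k, I_{k+1}, \ldots, I_\ell = J$ each step of which is such a cover; as every cover raises cardinality by exactly one, one automatically has $\#I_m = m$ along the way, so the sequence is precisely an $(I,J)$-monotone trapezoid. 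The implication (a) $\Rightarrow$ (c) is then immediate, since a monotone trapezoid is a chain $I = I_k \leq_{\interlace} \cdots \leq_{\interlace} I_\ell = J$ and $\leq_{\interlace}$ is transitive, as already observed.

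The real content is (c) $\Rightarrow$ (a), and the tool I would isolate first is a partial-sum reformulation of interlacing. Writing $P_t := \sum_{i=1}^{t}(\one_J - \one_I)_i$ and $r := \#J - \#I$, I claim
$$I \leq_{\interlace} J \quad\Longleftrightarrow\quad 0 \leq P_t \leq r \ \text{ for all } t \in \{0,1,\ldots,n\}.$$
This follows from the standard fact that for equal-size sets $A \leq_{\comp} B$ iff $\#(A \cap [t]) \geq \#(B \cap [t])$ for all $t$: applying it to the two componentwise comparisons defining $I \leq_{\interlace} J$, and using $\#(\{j_1,\ldots,j_k\} \cap [t]) = \min(\#(J\cap[t]),k)$ and $\#(\{j_{\ell-k+1},\ldots,j_\ell\}\cap[t]) = \max(\#(J\cap[t])-r,0)$ together with $0 \leq \#(I\cap[t]) \leq k$, the two comparisons collapse to $P_t \geq 0$ and $P_t \leq r$ respectively.

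With this in hand I would prove (c) $\Rightarrow$ (a) by induction on $r$, the case $r = 0$ forcing $I = J$. For $r \geq 1$, given $I \leq_{\interlace} J$, I would peel off one step from the top by defining $\one_{J'} := \one_J - a$, where $a$ is the integer vector whose $t$-th partial sum is $\min(1, P_t)$. Two checks then finish the step. First, the partial sums of $a$ lie in $\{0,1\}$ and run from $0$ to $\min(1,r)=1$, so $a \in \Alt_n$; by Proposition~\ref{interlace-covering-is-alternating-difference-prop} this makes $J' \lessdot J$ a cover with $\#J' = \ell - 1$. Second, the partial sums of $\one_{J'} - \one_I$ equal $P_t - \min(1,P_t) = \max(0, P_t - 1) \in [0, r-1]$, so $I \leq_{\interlace} J'$ by the reformulation. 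A brief case analysis on the sign of $(\one_J - \one_I)_t$ also confirms that $\one_{J'}$ is a genuine $0$--$1$ vector (the only delicate case, an $a_t = -1$ step dropping the level from $1$ to $0$, occurs exactly where $t \notin J$, so raising that coordinate keeps it in $\{0,1\}$). By induction there is an $(I,J')$-monotone trapezoid, and appending $J$ through the cover $J' \lessdot J$ produces the desired $(I,J)$-monotone trapezoid.

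The main obstacle is exactly the bookkeeping in this inductive step: the peeled-off vector $a$ must simultaneously be alternating and leave behind a legitimate indicator vector $\one_{J'}$, while the remainder must still interlace into $J'$. This is precisely why the partial-sum characterization is worth proving up front — it turns interlacing, membership in $\Alt_n$, and the $0$--$1$ constraint all into elementary inequalities on integer partial sums, after which the choice $\min(1,P_t)$ makes every verification routine.
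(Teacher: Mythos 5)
Your argument is correct, and for the one substantive direction, (c) $\Rightarrow$ (a), it takes a genuinely different route from the paper. The paper inducts on $\#J-\#I$ by inserting a set one step above the \emph{bottom}: it defines $H_{\min}(I,J)$ explicitly by $h_m=\max(i_{m-1},j_m)$ and verifies through elementwise inequalities (Lemma~\ref{minimal-H-lemma}) that this is a $(k+1)$-subset with $I \leq_{\interlace} H_{\min}(I,J) \leq_{\interlace} J$. You instead peel a layer off the \emph{top}, and you replace elementwise inequalities by a prefix-sum dictionary: $I \leq_{\interlace} J$ iff the partial sums $P_t$ of $\one_J-\one_I$ stay in $[0,r]$. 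This reformulation is correct (it is the standard Gale-order criterion applied to both halves of the definition of $\leq_{\interlace}$), and once it is in place the vector with partial sums $\min(1,P_t)$ does all the work; your checks that this vector lies in $\Alt_n$ and leaves behind a genuine indicator vector are exactly the ones needed. Incidentally, your $J'$ is the componentwise-largest $(\ell-1)$-subset between $I$ and $J$, a mirror image of the paper's componentwise-smallest $(k+1)$-subset. What the paper's approach buys is that $H_{\min}$ is not a throwaway device: the same construction and the same Lemma~\ref{minimal-H-lemma} are reused verbatim to define the $0$-Hecke operators $\pi_i$, the weak order, and the EL-labeling, so the proposition comes almost for free from machinery the paper needs anyway. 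What your approach buys is a shorter, self-contained proof of this one equivalence and a lattice-path picture of interlacing that makes the existence of monotone trapezoids (and the role of $\Alt_n$ as the set of unit layers one can peel off) completely transparent.
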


In proving this proposition, and in the sequel, the following
construction will be useful.  

\begin{definition} \rm \ \\
\label{def-of-Hmin}
For $I \leq_{\interlace} J$ with $\#I=k$
and $\#J \geq k+2$, define 
$H_{\min}(I,J):=\{h_1,h_2, \ldots, h_{k+1}\}$ by the rule 
\begin{equation}
\label{Hmin-formula}
h_m:=\max(i_{m-1},j_m),
\end{equation}
and convention $i_p:=0$ for $p = 0$. Thus when $k=0$, 
so that $I=\varnothing$, then $H_{\min}(\varnothing,J)=\{j_1\}$.
\end{definition}

\begin{lemma}
\label{minimal-H-lemma}
The set $H_{\min}(I,J)$ has these properties: 
\begin{itemize}
\item[(i)]
It is a $(k+1)$-subset, that is, $h_1<\cdots<h_{k+1}$.
\item[(ii)]
It lies in the family
$\{H \in \binom{[n]}{k+1}: I \leq_{\interlace} H \leq_{\interlace} J\}$.
\item[(iii)]
Every $H'$ in this family has $H_{\min}(I,J) \leq_{\comp} H'$.
\end{itemize}
\end{lemma}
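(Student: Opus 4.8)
The plan is to prove the three properties of $H_{\min}(I,J)$ directly from its defining formula $h_m = \max(i_{m-1}, j_m)$, using the interlacing hypothesis $I \leq_{\interlace} J$ written out in coordinates. Recall that with $I = \{i_1 < \cdots < i_k\}$ and $J = \{j_1 < \cdots < j_\ell\}$ where $\ell \geq k+2$, the hypothesis $I \leq_{\interlace} J$ unpacks into the two componentwise inequalities $j_m \leq i_m$ and $i_m \leq j_{\ell - k + m}$ for the relevant ranges of $m$. Since we only compare $H_{\min}$ against $(k+1)$-subsets, the effective comparison data we need are the ``bottom'' inequalities $j_m \leq i_m$ (for $1 \le m \le k$) and the genuinely useful fact that $j_{m+1} \geq i_m$ whenever $J$ interlaces $I$ through an intermediate $(k+1)$-set.

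For property (i), I would show $h_m < h_{m+1}$ by splitting into cases based on which of the two arguments achieves each maximum; the key inputs are that the $i$'s are strictly increasing, the $j$'s are strictly increasing, and the crossing inequality $i_{m-1} < j_{m+1}$ (which follows since $i_{m-1} \le i_m$ and $i_m$ is interlaced below $j_{m+1}$, or directly from $i_{m-1} < i_m \le j_{m+1}$). This handles all four case combinations and yields strict increase. For property (ii), I must verify both $I \leq_{\interlace} H_{\min}$ and $H_{\min} \leq_{\interlace} J$. Writing $H = H_{\min} = \{h_1 < \cdots < h_{k+1}\}$, the relation $I \leq_{\interlace} H$ with $\#H = \#I + 1$ is exactly the classical interlacing $h_m \leq i_m \leq h_{m+1}$; here $h_m = \max(i_{m-1}, j_m) \geq i_{m-1}$ gives one half, and $h_m \le i_m$ reduces to checking $i_{m-1} \le i_m$ (clear) and $j_m \le i_m$ (the interlacing hypothesis), so this is immediate from the formula. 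The relation $H \leq_{\interlace} J$ requires $\{j_1, \ldots, j_{k+1}\} \leq_{\comp} H \leq_{\comp} \{j_{\ell-k}, \ldots, j_\ell\}$; the lower bound $j_m \le h_m$ is trivial since $h_m = \max(i_{m-1}, j_m) \ge j_m$, while the upper bound $h_m \le j_{\ell - k - 1 + m}$ needs $i_{m-1} \le j_{\ell-k-1+m}$ and $j_m \le j_{\ell-k-1+m}$, both following from the interlacing hypothesis $I \le_{\interlace} J$ and monotonicity of the $j$'s.

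For property (iii), the minimality statement, I would take an arbitrary $H' = \{h'_1 < \cdots < h'_{k+1}\}$ in the family and show $h_m \leq h'_m$ for each $m$. The constraint $I \leq_{\interlace} H'$ gives $i_{m-1} \le h'_m$ (the lower classical-interlacing inequality shifted appropriately), and the constraint $H' \leq_{\interlace} J$ — or more directly, that $H'$ sits above $I$ and interlaces correctly — forces $j_m \le h'_m$. Since $h_m = \max(i_{m-1}, j_m)$ and both arguments are bounded above by $h'_m$, we conclude $h_m \le h'_m$, which is exactly $H_{\min}(I,J) \leq_{\comp} H'$.

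The main obstacle I anticipate is bookkeeping with the index shifts: the two ``halves'' of the interlacing definition live in different coordinate ranges, and it is easy to be off by one when translating $I \leq_{\interlace} H'$ (a covering-type interlacing, $\#H' = \#I+1$) versus $H' \leq_{\interlace} J$ (a long-range interlacing, $\#J - \#H' = \ell - k - 1$). I would therefore fix notation carefully at the outset, state the coordinatewise inequalities coming from each interlacing relation explicitly as displayed lines, and then verify (i), (ii), (iii) as short case analyses referencing those displays, so that no step requires more than comparing two or three indexed inequalities.
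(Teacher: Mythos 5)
Your parts (ii) and (iii) are correct and follow essentially the same route as the paper's proof: in (ii) you verify the four componentwise inequalities read off from $h_m=\max(i_{m-1},j_m)$ together with the two halves of $I\leq_{\interlace}J$ (with the same index bookkeeping, e.g.\ $h_m\leq j_{\ell-k-1+m}$ using $\ell-k\geq 2$), and in (iii) you extract $h'_m\geq i_{m-1}$ from $I\leq_{\interlace}H'$ and $h'_m\geq j_m$ from $\{j_1,\ldots,j_{k+1}\}\leq_{\comp}H'$ and take the maximum. No issues there.

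The gap is in part (i). Your ``crossing inequality'' $i_{m-1}<j_{m+1}$, and the stronger claim $i_m\leq j_{m+1}$ you cite to justify it, are both false in general. The hypothesis is $I\leq_{\interlace}J$ with $\#J=\ell\geq k+2$, whose upper half gives only $i_m\leq j_{\ell-k+m}$ with $\ell-k\geq 2$ --- off by at least one from what you want. Concretely, take $n=4$, $I=\{3,4\}$, $J=\{1,2,3,4\}$: one checks $I\leq_{\interlace}J$, yet $i_1=3>j_2=2$ and $i_1=3\not<j_3=3$ (and the existence of an intermediate $(k+1)$-set such as $H=\{1,3,4\}$ does not rescue the claim). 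Your four-case analysis is nonetheless repairable, because in the one case where the crossing inequality is invoked, namely $h_{m+1}=\max(i_m,j_{m+1})=j_{m+1}$, the case hypothesis itself supplies $j_{m+1}\geq i_m>i_{m-1}$; likewise the case $h_m=j_m$, $h_{m+1}=i_m$ needs the strict inequality $j_m<i_m$, which does not follow from interlacing alone ($j_m\leq i_m$ may be an equality) but does follow from the case hypothesis $i_m\geq j_{m+1}>j_m$. The paper sidesteps all of this in one line using integrality: since $i_{m-1}\leq i_m-1$ and $j_m\leq j_{m+1}-1$ for strictly increasing integer sequences, $h_m=\max(i_{m-1},j_m)\leq\max(i_m,j_{m+1})-1=h_{m+1}-1$. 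I recommend replacing your case split for (i) with that argument.
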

\begin{proof}
\vskip.1in
\noindent
{\sf Assertion (i).}
The definition of $H_{\min}(I,J)$ implies $h_m < h_{m+1}$ since
$$
h_m=\max(i_{m-1},j_m) 
\leq \max(i_m-1,j_{m+1}-1)
=    \max(i_m,j_{m+1})-1
=h_{m+1}-1.
$$

\vskip.1in
\noindent
{\sf Assertion (ii).}
We must show two $\leq_{\interlace}$-inequalities, or
equivalently, four $\leq_{\comp}$-inequalities.
\begin{itemize}
\item  Two of the four
come from $i_{m-1}, j_m \leq \max(i_{m-1},j_m)=h_m$ for 
$m=1,2,\ldots,k+1$, which shows both that $I \leq_{\comp} \{h_2,\ldots,h_{k+1}\}$
and also that $\{j_1,\ldots,j_{k+1}\} \leq_{\comp} H_{\min}(I,J)$.
\item
The inequality
$\{h_1,\ldots,h_k\} \leq_{\comp} I$ comes 
from 
$$
h_m=\max(i_{m-1},j_m) \leq \max(i_m,j_m)=i_m
$$
which uses $i_{m-1}<i_m$ and the fact that $\{j_1,\dots,j_m\} \leq_{\comp} I$ since $I \leq_{\interlace} J$.

\item The last inequality 
$H_{\min}(I,J) \leq \{j_{\ell-k},j_{\ell-k+1},\ldots,j_{\ell-1},j_\ell\}$
comes from 
$$
h_m=\max(i_{m-1},j_m) \leq j_{\ell-k+(m-1)}
$$
which uses
$j_m < j_{\ell-k+(m-1)}$ (as $\ell-k \geq 2$) and
$i_{m-1} \leq j_{\ell-k+(m-1)}$ (as $I \leq_{\interlace} J$).
\end{itemize}

\vskip.1in
\noindent
{\sf Assertion (iii).}
Any such $H'=\{h'_1<\cdots<h'_{k+1}\}$
has $I \leq_{\interlace} H' \leq_{\interlace} J$,
implying for $1 \leq m \leq k+1$ that
\begin{itemize}
\item $h'_m \geq i_{m-1}$, coming from $I \leq_{\comp} \{h'_2,h_3',\ldots,h'_{k+1}\}$,
\item $h'_m \geq j_m$, coming from $\{j_1,\ldots,j_m\} \leq_\comp H'$.
\end{itemize}
Thus $h'_m \geq \max(i_{m-1},j_m)=h_m$, 
that is, $H_{\min}(I,J)  \leq_{\comp} H'$, as desired.
\end{proof}

With the construction $H_{\min}(I,J)$ and its properties in hand, one can now prove 
Proposition~\ref{interlacing-equivalences-prop}.

\begin{proof}[Proof of Proposition~\ref{interlacing-equivalences-prop}.]
Note (a) $\Leftrightarrow$ (b) via 
Proposition~\ref{interlace-covering-is-alternating-difference-prop}
and definition of $\Phi_n$.
Then (a) $\Rightarrow$ (c) from the transitivity of $\leq_{\interlace}$,
while (c) $\Rightarrow$ (a) follows by induction on $\#J - \#I$ via
Lemma~\ref{minimal-H-lemma}.
%Note (c) $\Rightarrow$ (d) trivially, and
%(d) $\Rightarrow$ (c) since any $I',I''$ in $\binom{J}{k}$
%satisfy $\{j_1,\ldots,j_k\} \leq_{\comp} I'$ and
%$I'' \leq_{\comp} \{j_{\ell-k+1},\ldots,j_{\ell-1},j_\ell\}$.
\end{proof}

\begin{remark} \rm \ \\
\label{H-max-remark}
It is worth pointing out an involutive poset symmetry in $\Phi_n$, coming from the action of the
longest permutation $w_0=(n,n-1,\ldots,2,1)$ in $\symm_n$.  This permuation $w_0$ acts on subsets as follows:
$$
I=\{i_1 < i_2 < \cdots < i_k\} \quad \overset{w_0}{\longmapsto} \quad w_0(I):=\{n+1-i_k < \cdots < n+1-i_2 < n+1-i_1\}.
$$
Since $i \leq j$ if and only if $n+1-i \geq n+1-j$, this action of $w_0$ preserves the interlacing inequalities
\eqref{classical-interlacing-definition} that define the covering relations $I \lessdot_{\Phi_n} J$.  Thus
it is an involutive automorphism of the poset $\Phi_n$, and therefore also gives an involution on monotone triangles 
$$
T=(T_0,T_1,\ldots,T_n) \quad \overset{w_0}{\longmapsto} \quad  w_0(T):=(w_0(T_0),w_0(T_1),\ldots,w_0(T_n)).
$$
Passing through the bijection $\ASM_n \leftrightarrow \MT_n$, the corresponding involution $w_0$ acting
on a matrix $A=(a_{ij})$ in $\ASM_n$ simply reflects it through a vertical axis: $w_0(A):=(a_{i,n+1-j})$.

Due to this $w_0$-symmetry, for $I <_\interlace J$ with $\#J - \#I \geq 2$,
instead of defining the set $H_{\min}(I,J)$ as in Definition~\ref{def-of-Hmin}, we could have defined a set 
$H_{\max}(I,J)=\{h_1' < h_2' < \cdots < h'_{k+1}\}$ via two equivalent formulas:
\begin{equation}
\label{H-max-definition}
\begin{aligned}
h_m'&=\min(i_m, j_{m-1+\ell-k}) \text{ for }m=1,2,\ldots,k+1, \text{ with convention }i_{k+1}:=\infty,\text{ or }\\
H_{\max}(I,J) &= w_0( H_{\min}( w_0(I), w_0(J) )).
\end{aligned}
\end{equation}
One would then have the corresponding properties as in Lemma~\ref{minimal-H-lemma}, namely that
$H_{\max}(I,J)$ is actually a $(k+1)$-subset, that it lies between $I$ and $J$ in the order $<_{\interlace}$,
and that it is the componentwise {\it maximum} among all such $(k+1)$-subsets between $I$ and $J$.  We simply
chose  here to use $H_{\min}(I,J)$, not $H_{\max}(I,J)$.
\end{remark}

The key property that we will need for shellability of $\Phi_n$ is that, for any pair $I \leq_{\interlace} J$, 
there is a componentwise smallest $(I,J)$-monotone trapezoid,
and that it can be characterized {\it locally}.

\begin{lemma}
\label{minimal-trapezoid-lemma}
Fixing $I \leq_{\interlace} J$, the 
following are equivalent for an $(I,J)$-monotone trapezoid
$$
T:=((I=)I_k,I_{k+1},\ldots,I_{\ell-1},I_\ell(=J)):
$$
\begin{enumerate}
\item[(a)]
  $I_m=H_{\min}(I_{m-1},J)$ for $m=k+1,k+2,\ldots,\ell-1$.
\item[(b)]
 $I_m=H_{\min}(I_{m-1},I_{m+1})$ for $m=k+1,k+2,\ldots,\ell-1$.
\item[(c)]
The elements of 
  $I_m=\{h^{(m)}_1 < h^{(m)}_2 \cdots < h^{(m)}_m\}$ %have formula
are           $h^{(m)}_p=\max(j_p,i_{p+k-m})$ with $i_q = 0$ for $q \leq 0$.
\item[(d)]
  $T$ is the componentwise smallest among all $(I,J)$-monotone trapezoids.
\end{enumerate}
\end{lemma}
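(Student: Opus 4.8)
The plan is to treat the explicit description (c) as the hub, proving (a)$\Leftrightarrow$(c), then (c)$\Rightarrow$(b) and (b)$\Rightarrow$(a), and finally (c)$\Leftrightarrow$(d); this collapses all four conditions onto the single trapezoid given by the formula $h^{(m)}_p=\max(j_p,i_{p+k-m})$. I would first record that the endpoints are forced: at $m=k$ the formula reads $\max(j_p,i_p)=i_p$ (using $\{j_1,\dots,j_k\}\leq_{\comp}I$ from $I\leq_{\interlace}J$) and at $m=\ell$ it reads $\max(j_p,i_{p+k-\ell})=j_p$ (using $I\leq_{\comp}\{j_{\ell-k+1},\dots,j_\ell\}$), so both (a) and (c) carry content only on the interior rows $k+1\le m\le\ell-1$.

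\textbf{(a)$\Leftrightarrow$(c).} I would run an upward induction on $m$. Assuming $I_{m-1}$ obeys (c), the defining rule $h_p=\max\bigl((I_{m-1})_{p-1},j_p\bigr)$ for $H_{\min}(I_{m-1},J)$ becomes, after substituting $(I_{m-1})_{p-1}=\max(j_{p-1},i_{p+k-m})$ and absorbing $j_{p-1}\le j_p$, exactly $\max(j_p,i_{p+k-m})=h^{(m)}_p$ (the $p=1$ case using the convention $(I_{m-1})_0=0$). This single computation gives (a)$\Rightarrow$(c), and reading it backwards gives (c)$\Rightarrow$(a); note $H_{\min}(I_{m-1},J)$ is legitimate here since $\#J=\ell\ge(m-1)+2$ for $m\le\ell-1$.

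\textbf{(c)$\Rightarrow$(b) and (b)$\Rightarrow$(a).} For (c)$\Rightarrow$(b) I would substitute the formulas $(I_{m-1})_{p-1}=\max(j_{p-1},i_{p+k-m})$ and $(I_{m+1})_p=\max(j_p,i_{p+k-m-1})$ into $\max\bigl((I_{m-1})_{p-1},(I_{m+1})_p\bigr)$ and use that both the $j$'s and the $i$'s increase to recover $\max(j_p,i_{p+k-m})=h^{(m)}_p$. The reverse (b)$\Rightarrow$(a) is the step I expect to require the most care; I would argue by downward induction on $m$. The base case $m=\ell-1$ is immediate since there (b) reads $I_{\ell-1}=H_{\min}(I_{\ell-2},I_\ell)=H_{\min}(I_{\ell-2},J)$, which is (a). For the inductive step, combining (b), namely $(I_m)_p=\max\bigl((I_{m-1})_{p-1},(I_{m+1})_p\bigr)$, with the already-known (a) at level $m+1$, namely $(I_{m+1})_p=\max\bigl((I_m)_{p-1},j_p\bigr)$, yields the recursion $(I_m)_p=\max\bigl((I_{m-1})_{p-1},(I_m)_{p-1},j_p\bigr)$. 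Unrolling this down to the convention $(I_m)_0=0$ and invoking monotonicity of the $i$'s and $j$'s collapses the right-hand side to $\max\bigl((I_{m-1})_{p-1},j_p\bigr)$, which is precisely the $p$th element of $H_{\min}(I_{m-1},J)$, establishing (a) at level $m$.

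\textbf{(c)$\Leftrightarrow$(d).} For (c)$\Rightarrow$(d) let $T'=(I'_k,\dots,I'_\ell)$ be any competing $(I,J)$-monotone trapezoid; transitivity of $\leq_{\interlace}$ gives $I\leq_{\interlace}I'_m\leq_{\interlace}J$. The interlacing inequality $\{j_1,\dots,j_m\}\leq_{\comp}I'_m$ yields $(I'_m)_p\ge j_p$, while $I\leq_{\comp}\{(I'_m)_{m-k+1},\dots,(I'_m)_m\}$ yields $(I'_m)_p\ge i_{p+k-m}$ (trivially when $p+k-m\le 0$). Hence $(I'_m)_p\ge\max(j_p,i_{p+k-m})=h^{(m)}_p$, so the (c)-trapezoid lies componentwise below every competitor, which is (d). For the converse I would first note that the recursion in (a), together with Lemma~\ref{minimal-H-lemma}, actually produces a bona fide $(I,J)$-monotone trapezoid (each $H_{\min}(I_{m-1},J)$ is a valid interlacing set, and the chain reaches $J$ at step $\ell$), so a trapezoid satisfying (c) exists. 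Since that trapezoid is a componentwise minimum by (c)$\Rightarrow$(d), and any $T$ satisfying (d) is also a componentwise minimum, uniqueness of minima forces $T$ to equal it, giving (d)$\Rightarrow$(c). The main obstacle is the downward induction (b)$\Rightarrow$(a): it is the one place where a local three-term condition must be converted into a global one, and where the boundary conventions $i_q=0$ and $(I_m)_0=0$ must be tracked carefully through the unrolling.
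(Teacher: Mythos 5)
Your proposal is correct and follows essentially the same route as the paper: condition (c) is the explicit hub, conditions (a) and (b) are reduced to it by short computations with nested maxima of the $i_q$'s and $j_p$'s, and (d) follows from the componentwise bounds $(I'_m)_p \ge \max(j_p,i_{p+k-m})$ forced by interlacing together with the existence of the (c)-trapezoid. The only organizational difference is in handling (b): the paper proves (b)$\Rightarrow$(c) by induction on $\ell-k$ via restriction to sub-trapezoids, whereas you prove (b)$\Rightarrow$(a) by downward induction on the row index and supply an explicit (c)$\Rightarrow$(b) substitution check that the paper leaves implicit in its closing remark.
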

\begin{proof}
First check that if $T$ satisfies (a), then its entries have the formula
from (c), using induction on $m$.
The base case $m=k+1$ comes from the definition of $H_{\min}(I_k,J)$.
The inductive step is this calculation:
$$
h_p^{(m)}=\max(j_p,h^{(m-1)}_{p-1}) 
= \max(j_p, \max(j_p, i_{p-1+k-(m-1)}) )
= \max(j_p, i_{p+k-m}).
$$

Next check that if $T$ satisfies (b), then its entries obey the formula from (c), 
this time using induction on $\#J-\#I=\ell-k$.
Assume that (b) holds for the trapezoid $T$, so \[
I_m=\{h^{(m)}_1 < h^{(m)}_2 \cdots < h^{(m)}_m\}=H_{\min}(I^{(m-1)},I^{(m+1)}).
\]
This means that 
\begin{equation}
\label{squeezed-H-entry}
h^{(m)}_p = \max( h_p^{(m+1)}, h_{p-1}^{(m-1)} ).
\end{equation}
By restriction, condition (b) also holds for the smaller trapezoid
$( I_m,I_{m+1},\ldots,I_{\ell-1},I_\ell=J)$, and hence by induction,
one has $h_p^{(m+1)}=\max(j_p,h^{(m)}_{p-1})$.
Similarly, by restriction, condition (b) also holds for the smaller trapezoid
$( I=I_k,I_{k+1}, \ldots,I_{m-1},I_m)$, and hence by induction,
one has $h_{p-1}^{(m-1)}=\max(h_{p-1}^{(m)},i_{p-1+k-(m-1)})$.
Plugging these last two expressions into \eqref{squeezed-H-entry},
one concludes that 
$$
\begin{aligned}
h^{(m)}_p &= \max( \max(j_p,h^{(m)}_{p-1}), \max(h_{p-1}^{(m)},i_{p-1+k-(m-1)}) )\\
&=\max(j_p,h_{p-1}^{(m)},i_{p+k-m}))
=\max(j_p,i_{p+k-m})
\end{aligned}
$$
since $h_{p-1}^{(m)} < h^{(m)}_p$.  This last expression is the one from (c),
as desired.

Thus since (a) does define a monotone trapezoid
having $I,J$ as its bottom, top rows, then $T$ satisfiying 
(b) or (c) is equivalent to $T$ being the one defined by (a).

To see (c) $\Leftrightarrow$ (d), 
let $T'=((I=)I'_k,I'_{k+1},\ldots,I'_{\ell-1},I'_\ell(=J))$ be an $(I,J)$-monotone trapezoid, with $I_m'=\{i_1'<\ldots < i_m'\}$.  Then $i'_{p} \geq \max(j_p,i_{p+k-m})$ 
by the inequalities defining monotone trapezoids.
Since the sets defined using (c) form an $(I,J)$-monotone trapezoid, we see they must form the minimal $(I,J)$-monotone trapezoid and vice versa.
% 
%
%Next check (c) implies (d).  Assume (c), so that in $T$ one has
%$I_m=\{h^{(m)}_1,\ldots,h^{(m)}_m\}$ with $h_p^{(m)}= \max(j_p, i_{p+k-m})$. 
%Now consider any $(I,J)$-monotone trapezoid
%$T'=(I=I'_k,I'_{k+1},\ldots,I'_\ell=J)$, say with $I'_m=\{h'_1,\ldots,h'_m\}$.
%Since $I \leq_{\interlace} I'_m \leq_{\interlace} J$
%implies both
%\begin{itemize}
%\item $\{j_1,\ldots,j_m\} \leq_{\comp} I'_m$, so $h'_p \geq j_p$, and
%\item $I \leq_{\comp} \{h'_{m-k+1},h'_{m-k+2},\ldots,h'_m\}$, so $h'_p \geq i_{p+k-m}$, 
%\end{itemize} 
%one concludes that $h'_p \geq \max(j_p, i_{p+k-m})=h_p^{(m)}$.  Thus (d) holds.
%
%Finally, argue that (d) implies (b) contrapositively:  if
%$T$ were to fail (b) by having $I_m \neq H_{\min}(I_{m-1},I_{m+1})=:H$ for some $m$,
%then the trapezoid $T'$ obtained from $T$ by replacing $I_m$ with $H$
%would be componentwise smaller, according to Lemma~\ref{minimal-H-lemma}(iii).
\end{proof}

\begin{remark} \rm \ \\
\label{r:max}
It should not be surprising that there exists a componentwise
smallest $(I,J)$-monotone trapezoid, as in Lemma~\ref{minimal-trapezoid-lemma},
since Lascoux and Sch\"utzenberger \cite[\S 5]{lascoux1996treillis} showed that 
the componentwise
order on $\MT_n$ has meet and join operations given by componentwise minimum and
maximum.  Similarly, there is a componentwise largest such $(I,J)$-monotone trapezoid,
having similar properties, which can be built in a analogous fashion by
iterating the $H_{\max}(I,J)$ construction from Remark~\ref{H-max-remark}.

\end{remark}

%%%%%%%%%%%%%%%%%%%%%%%%%%%%%%%%%%%%%%%%%%%%%%%%%%%%%%%%%%%%%%%
\section{Action of $\HHH_n(0)$ and the weak order}
\label{s:zero-Hecke}

Recall from the Introduction \eqref{zero-Hecke-relations} that the {\it $0$-Hecke monoid}
$\HHH_n(0)$ for the symmetric group $\symm_n$ (or type $A_{n-1}$) is the monoid
with $n-1$ generators $\pi_1,\pi_2,\ldots,\pi_{n-1}$
subject to the usual {\it braid relations}
\begin{equation}
\label{braid-relations}
\begin{array}{rcll}
\pi_i \pi_j &=& \pi_j \pi_i &\text{ for }|i-j| \geq 2,\\
\pi_i \pi_{i+1} \pi_i &=& \pi_{i+1} \pi_i \pi_{i+1}  &\text{ for }i=1,2,\ldots,n-2,
\end{array}
\end{equation}
together with the {\it quadratic relations} 
\begin{equation}
\label{0-Hecke-quadratic-relation}
\pi_i^2=\pi_i\text{ for }i=1,2,\ldots,n-1.
\end{equation}
See Norton \cite{Norton} for background on $\HHH_n(0)$ and the associated
monoid algebra, called  a {\it $0$-Hecke algebra}.

\begin{definition} \rm \ \\
\label{d:pi}
Define maps $\pi_i: \MT_n \longrightarrow \MT_n$ for
$i=1,2,\ldots,n-1$ sending
$T \mapsto T \bubble \pi_i$, where $T \bubble \pi_i$ is
obtained from $T$ by replacing
its $i^{th}$ row $T_i$ with $H_{\min}(T_{i-1},T_{i+1})$.
\end{definition}

\begin{proposition}
\label{p:0-hecke}
The operators $\pi_i$ on $\MT_n$ satisfy the braid and quadratic relations \eqref{braid-relations}, \eqref{0-Hecke-quadratic-relation},
and hence define an action of $\HHH_n(0)$ on $\MT_n$.
\end{proposition}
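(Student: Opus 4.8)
The plan is to check the three families of $\HHH_n(0)$-relations in turn, after first observing that each $\pi_i$ really is a self-map of $\MT_n$. Well-definedness is immediate from Lemma~\ref{minimal-H-lemma}: replacing $T_i$ by $H_{\min}(T_{i-1},T_{i+1})$ produces an $i$-subset that still satisfies $T_{i-1} \leq_{\interlace} H_{\min}(T_{i-1},T_{i+1}) \leq_{\interlace} T_{i+1}$, so the resulting sequence is again a monotone triangle. The quadratic relation $\pi_i^2=\pi_i$ is then the easiest point: applying $\pi_i$ leaves rows $T_{i-1}$ and $T_{i+1}$ untouched, so a second application recomputes $H_{\min}(T_{i-1},T_{i+1})$ from identical data and returns the same row $i$.

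For the commutation relation $\pi_i\pi_j=\pi_j\pi_i$ with $|i-j|\geq 2$, I would argue by disjointness of support. The operator $\pi_i$ modifies only row $i$ and reads only rows $i-1$ and $i+1$; likewise $\pi_j$ modifies row $j$ and reads rows $j-1,j+1$. When $|i-j|\geq 2$, the row modified by one operator is never among the rows read or modified by the other; the only overlap occurs when $|i-j|=2$, and it is a common \emph{read}-row that neither operator changes. Hence the two operators act on disjoint data and their order is irrelevant.

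The braid relation $\pi_i\pi_{i+1}\pi_i=\pi_{i+1}\pi_i\pi_{i+1}$ is the real obstacle. Writing $A:=T_{i-1}$ and $D:=T_{i+2}$, I would show that \emph{both} composites replace the pair $(T_i,T_{i+1})$ by the two interior rows of the componentwise-smallest $(A,D)$-monotone trapezoid $(A,X,Y,D)$, which by Lemma~\ref{minimal-trapezoid-lemma} depends only on $A$ and $D$ (and not on $T_i,T_{i+1}$ or on the order of the substitutions). Since both composites fix $A$ and $D$, the two outputs then coincide. Concretely, I would unwind $\pi_i\pi_{i+1}\pi_i$ using the formula $H_{\min}(P,Q)_m=\max(p_{m-1},q_m)$ from Definition~\ref{def-of-Hmin}, abbreviating the increasing entries of $A,T_i,T_{i+1},D$ as $a,b,c,d$. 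Two interlacing inequalities among these four rows do the work: $c_{m-1}\leq d_m$, coming from $T_{i+1}\leq_{\interlace}T_{i+2}$, and $b_{m-1}\leq a_{m-1}$, coming from $T_{i-1}\leq_{\interlace}T_i$. Under the nested maxima these make the contributions of the intermediate rows $T_i,T_{i+1}$ collapse, so that the two resulting rows reduce to entries $\max(a_{m-1},d_m)$ and $\max(a_{m-2},d_m)$ — exactly the rows $X,Y$ predicted by formula (c) of Lemma~\ref{minimal-trapezoid-lemma} for $(I,J)=(A,D)$. The identical reduction applies verbatim to $\pi_{i+1}\pi_i\pi_{i+1}$ (now using $b_{m-1}\leq a_{m-1}$ at the first step and $c_{m-1}\leq d_m$ at the last), which finishes the argument.

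The delicate part is this braid computation: one must track all three successive substitutions and confirm that the intermediate rows genuinely \emph{drop out}, rather than merely bounding the output from one side. The conceptual reason they do is that the lower interlacing constraint imposed by $A$ and the upper one imposed by $D$ are the binding ones, so both orders of bubbling drive rows $i$ and $i+1$ down to the same $(A,D)$-minimal configuration; framing the identity through the locally characterized minimal trapezoid of Lemma~\ref{minimal-trapezoid-lemma} is what keeps the verification short.
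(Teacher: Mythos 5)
Your proposal is correct and follows essentially the same route as the paper: the quadratic and commutation relations are dispatched by (dis)jointness of the rows read and written, and the braid relation is verified by a local computation in rows $i-1$ through $i+2$ using the formula $h_m=\max(i_{m-1},j_m)$ for $H_{\min}$ together with the interlacing inequalities, under which the contributions of the intermediate rows collapse. Your extra gloss — identifying the common output of both composites as the interior of the minimal $(T_{i-1},T_{i+2})$-trapezoid from Lemma~\ref{minimal-trapezoid-lemma}(c) — is a valid strengthening that the paper only makes explicit later, in Proposition~\ref{Hecke-action-versus-minimum-triangles}.
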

\begin{proof}
The relations $\pi_i^2=\pi_i$ and $\pi_i \pi_j = \pi_j 
\pi_i$ for $|i-j|\geq 2$ should
be clear; only $\pi_i \pi_{i+1} \pi_i = \pi_{i+1} \pi_i \pi_{i+1}$ 
requires verification.
We can check this locally in rows $i-1,i,i+1,i+2$ of a monotone triangle $T$, by tracking two generic entries in rows $i, i+1$ shown in bold below.
Here, we are using concatenation of sets of entries to abbreviate their
maximum:

$$
\xymatrix@R=.01in@C=.01in{
 &a& &b& \\
c& &d& &\mathbf{e}\\
 &f& &\mathbf{g}& \\
h& &i& &
}
\quad\begin{matrix} \\ \\ \overset{\pi_i}{\longmapsto}\\  \end{matrix}\quad
\xymatrix@R=.01in@C=.01in{
 &a& &b& \\
?& &af& &\mathbf{bg}\\
 &f& &\mathbf{g}& \\
h& &i& &
}
\quad\begin{matrix} \\ \\ \overset{\pi_{i+1}}{\longmapsto}\\  \end{matrix}\quad
\xymatrix@R=.01in@C=.01in{
 &a& &b& \\
?& &af& &\mathbf{bg}\\
 &?& &\mathbf{afi}& \\
h& &i& &
}
\quad\begin{matrix} \\ \\ \overset{\pi_{i}}{\longmapsto}\\  \end{matrix}\quad
\xymatrix@R=.01in@C=.01in{
 &a& &b& \\
?& &?& &\mathbf{abfi}\\
 &?& &\mathbf{afi}& \\
h& &i& &
}
$$
$$
\xymatrix@R=.01in@C=.01in{
 &a& &b& \\
c& &d& &\mathbf{e}\\
 &f& &\mathbf{g}& \\
h& &i& &
}
\quad\begin{matrix} \\ \\ \overset{\pi_{i+1}}{\longmapsto}\\  \end{matrix}\quad
\xymatrix@R=.01in@C=.01in{
 &a& &b& \\
c& &d& &\mathbf{e}\\
 &ch& &\mathbf{di}& \\
h& &i& &
}
\quad\begin{matrix} \\ \\ \overset{\pi_{i}}{\longmapsto}\\  \end{matrix}\quad
\xymatrix@R=.01in@C=.01in{
 &a& &b& \\
?& &ach& &\mathbf{bdi}\\
 &ch& &\mathbf{di}& \\
h& &i& &
}
\quad\begin{matrix} \\ \\ \overset{\pi_{i+1}}{\longmapsto}\\  \end{matrix}\quad
\xymatrix@R=.01in@C=.01in{
 &a& &b& \\
?& &ach& &\mathbf{bdi}\\
 &?& &\mathbf{achi}& \\
h& &i& &
}
$$
Thus it only remains to check these equalities
\begin{eqnarray}
\label{first-putative-equality}
\max(a,b,f,i)&\overset{?}{=}&\max(b,d,i),\\
\label{second-putative-equality}
\max(a,f,i)&\overset{?}{=}&\max(a,c,h,i),
\end{eqnarray}
which both follow, since
\begin{itemize}
\item
 $a \leq d \leq b$ and $f \leq i$ implies that the two sides in  \eqref{first-putative-equality} are both equal
to $\max(b,i)$,
\item
$c,h\leq f \leq i$ implies that the two sides in \eqref{second-putative-equality} are both equal 
to $\max(a,i)$. $\qedhere$
\end{itemize}
\end{proof}

Once one knows that the operators $\pi_i$ satisfy the braid relations, one can define operators $\pi_w$ for every 
permutation $w$ in $\symm_n$ as follows:  pick any factorization 
$w=s_{i_1} s_{i_2} \cdots s_{i_\ell}$ for $w$ that is shortest possible (i.e., {\it reduced}) 
as a product of the adjacent transpositions $\{s_1,s_2,\ldots,s_{n-1}\}=:S$,
and then let 
$$
\pi_w:=\pi_{i_1} \pi_{i_2} \dots \pi_{i_\ell}.
$$
As a consequence of satisfying the relations of $\HHH_n(0)$, one could equivalently define $\pi_w$ recursively as follows:
\begin{equation}
\label{recursive-pi-definitions}
\pi_w \pi_i  := \begin{cases}
\pi_{w s_i} & \text{ if }w(i) < w(i+1), \text{ that is, if }i \not\in \Des(w),\\
\pi_{w} & \text{ if }w(i) > w(i+1), \text{ that is,  if }i \in \Des(w),
\end{cases}
\end{equation}
starting with the initial condition $\pi_e:=1$.

\begin{remark} \rm \ \\
\label{r:extend}
It is worth noting in the case where $T$ has $T_i \subset T_{i+1}$ for all $i$, so that 
\[
T=T(w):=(\varnothing, \{w_1\},\{w_1,w_2\},\ldots,\{w_1,w_2,\ldots,w_{n-1}\},[n])
\]
for some permutation $w=(w_1,w_2,\ldots,w_n)$ in $\symm_n$,
then one has
\[
T(w) \bubble \pi_i =  \begin{cases}
T(w) & \text{ if } w_i < w_{i+1}, \text{ that is, if }i \not\in \Des(w),\\
T(w s_i) & \text{ if } w_i > w_{i+1},\text{ that is, if }i \in \Des(w).\\
\end{cases}
\] 
Here $s_i=(i,i+1)$ is the adjacent transposition,
so that 
\[
ws_i=(w_1, w_2,\ldots,w_{i-1},w_{i+1},w_i,w_{i+2},\ldots,w_{n-1},w_n).
\]
Thus the action of $\HHH_n(0)$ on $MT_n$ extends its action 
on $\symm_n$ via {\it (bubble-)sorting operators} as mentioned in the Introduction. 
We let $w \bubble \pi_i$ denote the permutation corresponding to $T(w) \bubble \pi_i$,
so that $T(w) \bubble \pi_i=T(w \bubble \pi_i)$.
\end{remark}

\begin{definition} \rm \ \\
\label{d:weak_order}
Extend the {\it weak order} $<_W$ on the symmetric group $\symm_n$ to 
a weak order $<_W$ on monotone triangles $MT_n$ as 
the transitive closure of the relations $T \bubble \pi_i \leq T$
where $i$ is any index in the range $1,2,\ldots,n-1$.  
Equivalently, $T \leq_W T'$ means
that $T$ lies in the $\HHH_n(0)$-orbit of $T'$.
\end{definition}

\begin{remark} \rm \ \\
\label{why-called-weak-remark}
The name {\it weak order} is appropriate here,
since $(\MT_n,<_W)$ is indeed weaker than the {\it componentwise} order $(\MT_n,<_B)$,
and we view the latter as the appropriate extension of (strong) Bruhat order on $\symm_n$ to a strong Bruhat order on 
$\MT_n$, via MacNeille completion.  To see that $(\MT_n,<_W)$ is weaker than the componentwise order, note that
it is the transitive closure of the relations $T \bubble \pi_i \leq_W T$,  where $T \bubble \pi_i$ is obtained from $T$
by replacing the $i^{th}$ row of $T$ with  $H_{\min}(T_{i-1},T_{i+1})$,
the latter being componentwise smaller by Lemma~\ref{minimal-H-lemma}.
\end{remark}

%%%%%%%%%%%%%%%%%%%%%%%
\section{Proof of Theorem~\ref{thm:weak-order-gives-shellings}}
\label{s:weak-order-shelling}

Recall the statement of the theorem.
\vskip.1in
\noindent
{\bf Theorem~\ref{thm:weak-order-gives-shellings}.}
{\it 
Any linear extension $T^{(1)},T^{(2)},\cdots,T^{(N)}$ of $<_W$ on $\MT_n$ gives a shelling order on $\Phi_n$.
}
\vskip.1in

Before proving the theorem, we note in the next proposition a useful reinterpretation of Lemma~\ref{minimal-trapezoid-lemma}, generalizing the definition of the $T \bubble \pi_i$ on monotone triangles.  Given any subset $J \subseteq S:=\{s_1,\ldots,s_{n-1}\}$, recall there is a unique longest permutation $w_0(J)$ in the {\it (Young or parabolic) subgroup}  $\langle J \rangle$ of $\symm_n$ generated by $J$.  This $w_0(J)$ is an involution, characterized within $\langle J \rangle$ by the property that 
\begin{equation}
\label{descents-of-w0}
J=\Des(w_0(J)) (=\Des(w_0(J)^{-1}))
\end{equation}
(here we identify $J = \{s_{j_1},\dots,s_{j_k}\}$ with $\{j_1,\dots,j_k\}$). For example, if $n=9$ and $J=\{s_1,s_2, s_4,s_5,s_6,s_8\} \subset \{s_1,s_2,\ldots,s_8\}=S$,
then the parabolic subgroup $\langle J \rangle$ inside $\symm_{9}$ is the subgroup isomorphic to
$\symm_3 \times \symm_4 \times \symm_2$ that stabilizes the blocks of the
partition $\{1,2,3\},\{4,5,6,7\},\{8,9\}$.  Its longest permutation is $w_0(J)=(3,2,1,\,7,6,5,4,\,9,8)$.

\begin{proposition}
\label{Hecke-action-versus-minimum-triangles}
Given any monotone triangle $T$ and $J \subseteq S$, then $T \bubble \pi_{w_0(J)}$ is the unique componentwise
smallest monotone triangle $T^{\min}$ having the same rows $T_m$ as $T$ for all $s_m \not\in J$.
\end{proposition}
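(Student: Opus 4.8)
The plan is to identify $T \bubble \pi_{w_0(J)}$ as a simultaneous fixed point of all the operators $\pi_i$ with $s_i \in J$, and then to recognize this fixed-point condition as precisely hypothesis (b) of Lemma~\ref{minimal-trapezoid-lemma}, applied run-by-run over the blocks of $J$.

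First I would record two elementary consequences of the definitions. Since $w_0(J)$ lies in the parabolic subgroup $\langle J \rangle$, any reduced word for it uses only letters from $J$; as each $\pi_i$ alters only the $i^{th}$ row, the rows $T_m$ with $s_m \notin J$ are left untouched, so $T' := T \bubble \pi_{w_0(J)}$ is a monotone triangle agreeing with $T$ on those rows. Next, because $\Des(w_0(J)) = J$ by \eqref{descents-of-w0}, the recursion \eqref{recursive-pi-definitions} gives $\pi_{w_0(J)} \pi_i = \pi_{w_0(J)}$ for every $s_i \in J$; hence $T' \bubble \pi_i = T'$, which by Definition~\ref{d:pi} says exactly that $T'_i = H_{\min}(T'_{i-1}, T'_{i+1})$ for all $i$ with $s_i \in J$.

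The main step is then to read off minimality. I would decompose $J$ into its maximal runs of consecutive indices $\{a, a+1, \ldots, b\}$. For such a run the boundary rows $T'_{a-1} = T_{a-1}$ and $T'_{b+1} = T_{b+1}$ are fixed (taking $T_0 = \varnothing$ and $T_n = [n]$ at the two ends), while the intervening rows satisfy $T'_m = H_{\min}(T'_{m-1}, T'_{m+1})$. This is precisely hypothesis (b) of Lemma~\ref{minimal-trapezoid-lemma} for the $(T_{a-1}, T_{b+1})$-monotone trapezoid cut out by the run, so by the equivalence (b) $\Leftrightarrow$ (d) that sub-trapezoid is the componentwise smallest one with those endpoints.

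Finally I would assemble the blocks. Any monotone triangle $\tilde{T}$ with $\tilde{T}_m = T_m$ for all $s_m \notin J$ restricts on each run to a $(T_{a-1}, T_{b+1})$-monotone trapezoid, so $T'_m \leq_{\comp} \tilde{T}_m$ there by the previous step, and $T'_m = \tilde{T}_m$ off the runs; thus $T' \leq_{\comp} \tilde{T}$. Since $T'$ is itself such a $\tilde{T}$, it is the unique componentwise smallest one, i.e. $T' = T^{\min}$. I expect the only delicate points to be the translation between ``$\pi_i$ fixes $T'$'' and condition (b) of Lemma~\ref{minimal-trapezoid-lemma}, and the bookkeeping that each maximal run of $J$ has fixed boundary rows, so that the lemma applies run-by-run rather than to a single trapezoid spanning all of $J$.
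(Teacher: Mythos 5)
Your proposal is correct and follows essentially the same route as the paper's own proof: identify $T \bubble \pi_{w_0(J)}$ as fixed by every $\pi_i$ with $s_i \in J$ via $\Des(w_0(J))=J$ and the recursion \eqref{recursive-pi-definitions}, then invoke the equivalence (b) $\Leftrightarrow$ (d) of Lemma~\ref{minimal-trapezoid-lemma} to conclude componentwise minimality. Your explicit decomposition of $J$ into maximal runs of consecutive indices is a careful spelling-out of a step the paper leaves implicit, but it is the same argument.
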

\begin{proof}
Lemma~\ref{minimal-trapezoid-lemma}(b) shows that this componentwise smallest triangle $T^{\min}$ is uniquely characterized by 
$$
T^{\min}_m=\begin{cases}
T_m& \text{ for }s_m \not\in J,\\
H_{\min}(T^{\min}_{m-1},T^{\min}_{m+1}) &\text{ for }s_m \in J,
\end{cases}
$$
On the other hand, we claim that the triangle $T'= T \bubble \pi_{w_0(J)}$ has these same properties:
\begin{itemize}
\item $T'=T \bubble \pi_{w_0(J)}$ shares the same rows $T'_m=T_m$ for $s_m \not\in J$ since $w_0(J)$ lies in $\langle J \rangle$. 
\item For any $s_m \in J$ one has 
$
T' \bubble \pi_{m}
=T \bubble \pi_{w_0(J)} \pi_{m}  
= T \bubble \pi_{w_0(J)}
=T'
$
combining \eqref{recursive-pi-definitions} with the fact that
$s_m$  lies in $J=\Des(w_0(J))$ by \eqref{descents-of-w0}.
This means that $T'_m=H_{\min}(T'_{m-1},T'_{m+1})$. $\qedhere$
\end{itemize}
\end{proof}

\begin{proof}[Proof of Theorem~\ref{thm:weak-order-gives-shellings}.]
Thinking of each monotone triangle $T^{(i)}$ as corresponding to a facet, we identify it
with its subset of $n+1$ vertices, namely
$$
T^{(i)}=\{\varnothing=T^{(i)}_0,T^{(i)}_1,\ldots,T^{(i)}_{n-1},T^{(i)}_n=[n]\}.
$$
Shellability, as defined in the Introduction, requires that for each pair $i,j$ with $1 \leq i<j \leq N$, 
we must exhibit some $k<j$ satisfying $\# T^{(k)} \cap T^{(j)} = n$ (including $\varnothing$ and $[n]$)
and $T^{(i)} \cap T^{(j)} \subseteq T^{(k)} \cap T^{(j)}$.

Given $i < j$, let $J:=\{m: T^{(i)}_m  \neq T^{(j)}_m\}$.  We claim
that $ T^{(j)} \bubble \pi_m \neq T^{(j)}$ for at least one $m$ in $J$, otherwise
Proposition~\ref{Hecke-action-versus-minimum-triangles} implies the two equalities here
$$
T^{(j)} =  T^{(j)} \bubble \pi_{w_0(J)}
=  T^{(i)} \bubble \pi_{w_0(J)}
\leq_W T^{(i)},
$$
but then the inequality $T^{(j)} \leq_W T^{(i)}$  would contradict $i < j$. 

Given such an $m$, one checks that the index $k$ defined by $ T^{(j)} \bubble \pi_m=T^{(k)}$ does the job:
\begin{itemize}
\item $T^{(k)}= T^{(j)} \bubble \pi_m <_W T^{(j)}$ implies that $k < j$.
\item $\# \left(T^{(k)} \cap T^{(j)}\right) = \# \left(  T^{(j)} \bubble \pi_m \right) \cap T^{(j)} = n-1$, since $ T^{(j)} \bubble \pi_m \neq T^{(j)}$.
\item $T^{(i)} \cap T^{(j)} \subseteq T^{(k)} \cap T^{(j)}$ 
because $s_m$ lies in $J$. $\qedhere$
\end{itemize}
\end{proof}

We close this section with two remarks about the above shelling.

\begin{remark} \rm \ \\
Since the $\pi_i$ operators on $\MT_n$ restrict to the usual bubble-sorting operators
on the symmetric group $\symm_n$ embedded inside $\MT_n$ via $w \mapsto T(w)$, one
finds that the subposet $(\symm_n, <_W)$ is actually an {\it order ideal} inside
$(\MT_n,<_W)$; it is even the {\it principal order ideal} below 
$T(w_0)$ where $w_0=(n,n-1,\ldots,2,1)$.

As a consequence, it is possible to pick a linear extension of $<_W$ on $\MT_n$ which
contains all of the elements of the order ideal $\symm_n$ as an initial segment.  This
then gives a shelling order on the facets of $\Delta(\Phi_n \setminus \{ \hat{0},\hat{1} \})$
which shells the Coxeter complex $\Delta(2^{[n]} \setminus \{ \hat{0},\hat{1} \})$ {\it first},
before continuing on to shell the remaining facets of $\Delta(\Phi_n \setminus \{ \hat{0},\hat{1} \})$ that do not correspond to permutations.
\end{remark}

\begin{remark} \rm \ \\
Shellability implies that the $(n-2)$-dimensional simplicial complex
$\Delta(\Phi_n \setminus \{ \hat{0},\hat{1} \})$ has the 
homotopy type of a bouquet of $(n-2)$-spheres.  The Coxeter complex 
$\Delta(2^{[n]} \setminus \{ \hat{0},\hat{1} \})$ inside it is 
homeomorphic to a single $(n-2)$-sphere, and this sphere has well-known
easy embeddings into $\R^{n-1}$.  For example, it is isomorphic
to the {\it barycentric subdivision}
of the boundary of a simplex with vertex set $\{1,2,\ldots,n\}$.
Alternatively one can embed it within the hyperplane $x_1+\cdots+x_n=0$ inside
$\R^n$ by extending piecewise-linearly the map that sends its vertices 
to the $\symm_n$-images of the fundamental dominant weights of type $A_{n-1}$:  
the vertex indexed by a subset $I$ with $\varnothing \subsetneq I \subseteq [n]$ is sent to
the vector $\sum_{i \in I} e_i - \frac{\#I}{n}(e_1+\cdots+e_n)$ where $e_i$ is the $i^{th}$ standard basis vector of $\R^n$.

After looking at the picture  \eqref{simplicial-complexes-for-n=3} 
of $\Delta(\Phi_3 \setminus \{ \hat{0},\hat{1} \})$, which embeds it in $\R^2$, one might
wonder whether $\Delta(\Phi_n \setminus \{ \hat{0},\hat{1} \})$ embeds in some
simple way into $\R^{n-1}$.  We are doubtful.  For example, when $n=4$,
one can check that if one takes either of the two vertex coordinates for embedding
$\Delta(2^{[4]} \setminus \{ \hat{0},\hat{1} \})$ into $\R^3$ as described
in the previous paragaph, when one extends this piecewise-linearly over
the extra simplices in $\Delta(\Phi_4 \setminus \{ \hat{0},\hat{1} \})$,
it leads to self-intersections, and not an embedding.
\end{remark}

%%%%%%%%%%%%%%%%%%%%%%%%%%%%%%%%%%%%%%%%%%%%%%%%%%%%%%%%%%%%%%%
\section{EL-labeling and proof of Theorem~\ref{t:asm_el}}
\label{s:EL}

Recall the statement of the theorem.
\vskip.1in
\noindent
{\bf Theorem~\ref{t:asm_el}.}
{\it
There is a partial order on $\Alt_n$ so that the edge-labeling $\lambda$ which assigns 
$\lambda(I \lessdot J) = \one_J - \one_I$ in $\Alt_n$ becomes an EL-labeling of $\Phi_n$.
Furthermore, any of the EL-shelling orders associated with this EL-labeling is a linear order on $\MT_n$ which extends the weak order $<_W$. 
}
\vskip.1in

\noindent
We will define the partial order 
on $\Alt_n$ via its identification with a Boolean algebra $2^{[n-1]}$.
Note that a vector $v$ in $\{0,\pm 1\}^n$ lies in $\Alt_n$ exactly when
each of its {\it tail sums} 
$v \cdot \one_{[i,n]}= v_i+v_{i+1}+ \cdots+ v_n$ lies in $\{0,+1\}$, with
$\sum_{i=1}^n v_i=+1$.  The following proposition is straightforward to verify.

\begin{proposition}
One has mutually-inverse bijections 
$$
\begin{array}{rcl}
\Alt_n & \overset{\varphi}{\longrightarrow} & 2^{[n{-}1]}\\
v & \overset{\varphi}{\longmapsto} & S(v):=\{i \in [n{-}1]:  v \cdot \one_{[i{+}1,n]}=+1\} \\
e_1 +\sum_{i \in S} (e_{i+1} - e_i) &  \overset{\varphi^{-1}}{\mapsfrom} & S.
\end{array}
\qedhere
$$
\end{proposition}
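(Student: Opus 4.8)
The plan is to exploit the tail-sum characterization of $\Alt_n$ recorded immediately before the statement. The organizing observation is that a vector $v \in \{0,\pm 1\}^n$ is completely determined by its sequence of tail sums $t_k(v) := v \cdot \one_{[k,n]}$ for $1 \le k \le n$, since one recovers each coordinate as $v_k = t_k(v) - t_{k+1}(v)$ under the convention $t_{n+1}(v) = 0$. By the cited characterization, membership $v \in \Alt_n$ is equivalent to each $t_k(v)$ lying in $\{0,1\}$ together with $t_1(v) = 1$. Hence the tail-sum map identifies $\Alt_n$ with the set of $\{0,1\}$-sequences $(t_1,\dots,t_n)$ having $t_1 = 1$, and such sequences are in evident bijection with subsets of $[n-1]$ by recording the positions $i \in [n-1]$ at which $t_{i+1} = 1$. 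This is precisely $S(v)$, so the task is to check that $\varphi^{-1}$ as written implements this dictionary in reverse.

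First I would record that $\varphi$ is well-defined and injective: it is well-defined since $S(v) \subseteq [n-1]$ by construction, and injective because $S(v)$ records the tail sums $t_{i+1}(v)$ for $i \in [n-1]$ while $t_1(v) = 1$ is forced, so that $S(v)$ determines the full tail-sum sequence, which by the inversion formula determines $v$.

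Next I would verify that $\varphi^{-1}$ genuinely lands in $\Alt_n$ and inverts $\varphi$. Writing $w := e_1 + \sum_{i \in S}(e_{i+1}-e_i)$, a short coordinate computation gives $w_k = [k=1] + [k-1 \in S] - [k \in S]$, whence the tail sum telescopes to $t_k(w) = [k=1] + [k-1 \in S]$. This lies in $\{0,1\}$ for every $k$ and equals $1$ at $k=1$, so $w \in \Alt_n$ by the characterization. Reading off $t_{i+1}(w) = [i \in S]$ for $i \in [n-1]$ shows $S(w) = S$, i.e. $\varphi \circ \varphi^{-1} = \mathrm{id}$. For the other composite, the tail sums of $\varphi^{-1}(S(v))$ computed above coincide with those of $v$ — using that the tail sums of $v \in \Alt_n$ are $\{0,1\}$-valued with $t_1(v)=1$, so that $t_k(v) = [k=1] + [k-1 \in S(v)]$ for all $k$ — so the inversion formula forces $\varphi^{-1}(S(v)) = v$.

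I do not expect a serious obstacle here; the content is entirely a telescoping bookkeeping exercise. The only point demanding a little care is the boundary behavior at $k=1$ together with the conventions $0 \notin S$ and $n \notin S$ (both automatic from $S \subseteq [n-1]$), which is exactly what makes the full sum $t_1 = 1$ come out correctly rather than merely lying in $\{0,1\}$.
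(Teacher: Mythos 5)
Your proof is correct and follows exactly the route the paper intends: the proposition is stated as ``straightforward to verify'' with no written proof, but the tail-sum characterization of $\Alt_n$ recorded immediately beforehand is clearly the intended basis, and your telescoping computation $t_k(w)=[k{=}1]+[k{-}1\in S]$ together with the inversion $v_k=t_k-t_{k+1}$ supplies the full verification. The only detail worth making explicit is that the formula $w_k=[k{=}1]+[k{-}1\in S]-[k\in S]$ also shows $w\in\{0,\pm1\}^n$, which is a hypothesis of the cited characterization; this is immediate from your expression since $0\notin S$.
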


\begin{definition} \rm \ \\
Put a partial order $<_{EL}$ on $\Alt_n$ that pulls back the inclusion order on $2^{[n{-}1]}$ via the above
bijection $\varphi$, that is, $v \leq_{EL} w$ if and only if  $S(v) \subseteq S(w)$.
Equivalently, %if we let $e_1,e_2,\ldots,e_n$ denote the standard basis in $\R^n$, then
$v \leq_{EL} w$ if and only for every $i=1,2,\ldots,n$ one has dot product 
$(w-v) \cdot (e_i+e_{i+1}+\cdots+e_n) \geq 0$.
\end{definition}

\begin{example} \rm \ \\
Here is the order $<_{EL}$ on $\Alt_n$ for $n=3,4,5$:
$$
\small
\xymatrix@C=.02in@R=.2in{
& & \\
& & \\
&\circ\circ{+}& \\
\circ{+}\circ\ar@{-}[ur]& &+{-}+\ar@{-}[ul]\\
&{+}\circ\circ\ar@{-}[ur]\ar@{-}[ul]& \\
}
\quad
\xymatrix@C=.02in@R=.2in{
 & & \\
                                &\circ\circ\circ{+}& \\
\circ\circ{+}\circ\ar@{-}[ur]&\circ{+}{-}{+}\ar@{-}[u] &{+}{-}\circ{+}\ar@{-}[ul]\\
\circ{+}\circ\circ\ar@{-}[u]\ar@{-}[ur]&{+}{-}{+}\circ\ar@{-}[ul] \ar@{-}[ur]&{+}\circ{-}{+}\ar@{-}[ul]\ar@{-}[u]\\
                                &{+}\circ\circ\circ \ar@{-}[ul] \ar@{-}[u] \ar@{-}[ur] & \\
}
\xymatrix@C=.03in@R=.3in{
     &                             &                            &\circ\circ\circ\circ{+} &                             &                                & \\
     & \circ\circ\circ+\circ\ar@{-}[urr] & {+}{-}\circ\circ{+} \ar@{-}[ur] &           & \circ{+}{-}\circ{+} \ar@{-}[ul]  &\circ\circ{+}{-}{+}  \ar@{-}[ull]  &\\
\circ\circ{+}\circ\circ\ar@{-}[ur]  \ar@{-}[urrrrr] & {+}{-}\circ{+}\circ \ar@{-}[u] \ar@{-}[ur] & \circ{+}{-}{+}\circ \ar@{-}[ul] \ar@{-}[urr] &   & {+}\circ{-}\circ{+} \ar@{-}[u] \ar@{-}[ull]& \circ{+}\circ{-}{+} \ar@{-}[u] \ar@{-}[ul]  &{+}{-}{+}{-}{+} \ar@{-}[ul]   \ar@{-}[ullll]  \\  
     & \circ{+}\circ\circ\circ\ar@{-}[ul] \ar@{-}[ur] \ar@{-}[urrrr]  & {+}{-}{+}\circ\circ \ar@{-}[ull] \ar@{-}[ul] \ar@{-}[urrrr] &           & {+}\circ{-}{+}\circ \ar@{-}[u]  \ar@{-}[ull] \ar@{-}[ulll] &{+}\circ\circ{-}{+} \ar@{-}[ul] \ar@{-}[u] \ar@{-}[ur]  &\\
      &                             &                            &{+}\circ\circ\circ\circ \ar@{-}[ull] \ar@{-}[ul] \ar@{-}[ur] \ar@{-}[urr]&                             &                                & \\
}
$$
\end{example}

Next, we show that $\lambda:C(\Phi_n) \to \Alt_n$ defined by $\lambda(I \lessdot J):= \one_J - \one_I$
is an EL-labeling of $\Phi_n$ with respect to $<_{EL}$ on $\Alt_n$.  For the rest of this section, 
fix a pair $I <_{\interlace} J$ in $\Phi_n$ with $H_{\min}(I,J)$
as in Definition~\ref{def-of-Hmin}.

\begin{lemma}
\label{l:min_bottom}
Assume $I <_{\interlace} H <_{\interlace} J$ with $\#H=\#I+1$.
Then
$
\one_{H_{\min}(I,J)} - \one_I \,\, \leq_{EL}  \,\, \one_H-\one_I.
$
\end{lemma}

\begin{proof}
Recall $\leq_{EL}$ can be rephrased
as follows: $A \leq_{EL} B$ if and only if 
$(\one_B-\one_A) \cdot \one_{[\ell,n]} \geq 0$ for all $\ell$.

Thus, since $H_{\min}(I,J) \leq_{\comp} H$ according to 
Lemma~\ref{minimal-H-lemma}(iii), for all $\ell$ one will have 
$$
\left( (\one_{H}-\one_{I})-(\one_{H_{\min}(I,J)}-\one_I) \right) \cdot \one_{[\ell,n]}
= (\one_{H}-\one_{H_{\min}(I,J)}) \cdot \one_{[\ell,n]}
\geq 0. \qedhere
$$
\end{proof}

It turns out that one can characterize $H_{\min}(I,J)$ in terms of $\leq_{EL}$.

\begin{lemma}
\label{EL-characterization-of-Hmin}
Assume $I <_{\interlace} H <_{\interlace} J$ with $\#J=\#I+2$.  Then 
$$
\one_{H} - \one_I \,\, \leq_{EL}  \,\, \one_J-\one_K
\quad \text{ if and only if } \quad
H=H_{\min}(I,J).
$$
\end{lemma}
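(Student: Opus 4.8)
The plan is to read the displayed statement with $K=H$, so that $\one_H-\one_I$ and $\one_J-\one_H$ are the bottom and top edge labels of the rank-two chain $I\lessdot H\lessdot J$, and the asserted relation $\one_H-\one_I\leq_{EL}\one_J-\one_H$ says exactly that this chain is weakly rising. I would first translate this into tail sums. Writing $f_S(\ell):=\one_S\cdot\one_{[\ell,n]}=\#\{s\in S:s\geq\ell\}$ and using the reformulation of $\leq_{EL}$ recalled in the proof of Lemma~\ref{l:min_bottom}, the rising condition becomes
\begin{equation*}
f_I(\ell)+f_J(\ell)\ \geq\ 2f_H(\ell)\quad\text{for all }\ell.\tag{$\ast$}
\end{equation*}
Because $I\lessdot H$ and $H\lessdot J$ are covers, Proposition~\ref{interlace-covering-is-alternating-difference-prop} makes $\one_H-\one_I$ and $\one_J-\one_H$ alternating, so $f_H-f_I$ and $f_J-f_H$ take values in $\{0,1\}$ and $f_I\leq f_H\leq f_J$ pointwise. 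Consequently $(\ast)$ fails precisely when there is some $\ell$ with $f_H(\ell)=f_I(\ell)+1$ and $f_J(\ell)=f_H(\ell)$ at once, and the whole lemma reduces to controlling such ``bad'' $\ell$.

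For the implication $H=H_{\min}(I,J)\Rightarrow(\ast)$ I would argue by contradiction from the explicit formula $h^{\min}_m=\max(i_{m-1},j_m)$. Suppose a bad $\ell$ exists and set $r:=\#\{m:h^{\min}_m<\ell\}$; since $h^{\min}$ is increasing, these indices are exactly $1,\dots,r$ and $f_{H_{\min}}(\ell)=k+1-r$. The inequalities $h^{\min}_m<\ell$ for $m\leq r$ yield $i_0,\dots,i_{r-1}<\ell$ and $j_1,\dots,j_r<\ell$. The equality $f_{H_{\min}}(\ell)=f_I(\ell)+1$ forces $f_I(\ell)=k-r$, i.e.\ one more element of $I$ lies below $\ell$, namely $i_r<\ell$. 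But then the maximum defining $h^{\min}_{r+1}=\max(i_r,j_{r+1})\geq\ell$ cannot be attained by $i_r$, so $j_{r+1}\geq\ell$; together with $j_1,\dots,j_r<\ell$ this gives $f_J(\ell)=k+2-r=f_{H_{\min}}(\ell)+1$, contradicting $f_J(\ell)=f_{H_{\min}}(\ell)$. I expect this to be the main obstacle: the naive union bound only gives $f_{H_{\min}}\leq f_I+f_J$, which is too weak for $(\ast)$, so one genuinely has to pin down the single index $r+1$ at which the maximum in $h^{\min}$ switches from its $i$-part to its $j$-part.

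For the converse I would establish the contrapositive by exhibiting an explicit bad $\ell$. If $H\neq H_{\min}(I,J)$ then $H_{\min}(I,J)\leq_{\comp}H$ by Lemma~\ref{minimal-H-lemma}(iii), so there is a least index $p$ with $h_p>h^{\min}_p=\max(i_{p-1},j_p)$. Taking $\ell=h_p$, the bound $i_{p-1}<h_p$ gives $f_I(h_p)\leq k+1-p$, while $j_p\leq h^{\min}_p<h_p$ together with the interlacing inequality $h_p\leq j_{p+1}$ from $H\leq_{\interlace}J$ gives $f_J(h_p)=k+2-p=f_H(h_p)$. Hence $f_I(h_p)+f_J(h_p)\leq 2k+3-2p<2k+4-2p=2f_H(h_p)$, so $(\ast)$ fails at $\ell=h_p$ and the chain through $H$ is not rising. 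Combining the two directions yields the claimed equivalence.
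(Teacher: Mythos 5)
Your proof is correct, and you rightly read the statement's $\one_J-\one_K$ as a typo for $\one_J-\one_H$. The tail-sum translation and the forward direction are essentially the paper's: the paper verifies $\#J\cap[\ell,n]+\#I\cap[\ell,n]-2\#H\cap[\ell,n]\geq 0$ directly by a case split on whether $h_k$ equals $i_{k-1}$ and/or $j_k$, whereas you run the same computation as a contradiction, showing that a ``bad'' $\ell$ with $f_H(\ell)=f_I(\ell)+1$ would force $f_J(\ell)=f_H(\ell)+1$; this is the same calculation packaged more economically. Where you genuinely diverge is the converse. The paper assumes the rising condition, proves a preliminary Claim (one cannot have $i_{k-1}<h_k<i_k$, since such an $h_k$ lies in $H\setminus I$, making the tail sum of $\one_H-\one_I$ at $h_k$ equal to $+1$ while that of $\one_J-\one_H$ is $0$), and then runs an induction from a maximal index $k$ with $h_k>\max(i_{k-1},j_k)$, propagating forced coincidences $j_m=h_m=i_m$ until finiteness of the sets is contradicted. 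You instead prove the contrapositive directly: using Lemma~\ref{minimal-H-lemma}(iii) to locate an index $p$ with $h_p>\max(i_{p-1},j_p)$, you exhibit the single witness $\ell=h_p$ at which $f_I(\ell)+f_J(\ell)<2f_H(\ell)$. This is shorter and avoids the induction entirely; note you do not even need $p$ minimal, since the three facts you use ($i_{p-1}<h_p$, $j_p<h_p$, and $h_p\leq j_{p+1}$ from interlacing) hold at any index where $h_p$ exceeds $h^{\min}_p$. The only loose end, a harmless one, is the boundary case $r=k+1$ in your forward direction, where $h^{\min}_{r+1}$ would not exist; it is excluded because a bad $\ell$ would then give $f_I(\ell)=-1$.
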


\begin{proof}
Name the elements of $I,H,J$ as follows:
$$
\begin{aligned}
I&=\{i_1 < \cdots < i_p\},\\
H&=\{h_1 < \cdots < h_p<h_{p+1}\},\\
J&=\{j_1 < \cdots < j_p<j_{p+1}<j_{p+2}\}.
\end{aligned}
$$

\vskip.2in
\noindent
($\Leftarrow$):
Assume $H=H_{\min}(I,J)$.  We check for each $\ell$ that 
$
(\one_H-\one_I)\cdot \one_{[\ell,n]} 
\leq (\one_J-\one_H)\cdot \one_{[\ell,n]},
$
or equivalently, 
$$
 \#J \cap [\ell,n] + \#I \cap [\ell,n] - 2\#H\cap [\ell,n] \,\, \geq \,\, 0.
$$
If $H \cap [\ell,n] = \varnothing$, this is clear.  Otherwise, let
$H \cap [\ell,n]=\{h_k,h_{k+1},\ldots,h_{p+1}\}$, so that
$\#H\cap [\ell,n] = p+2-k$.  Then the interlacing $I <_{\interlace} H <_{\interlace} J$
along with $h_k=\max(i_{k-1},j_k)$ imply that
$$
\begin{aligned}
I \cap [\ell,n]&=
\begin{cases}
\{i_k,i_{k+1},\ldots,i_{p}\} & \text{ if }h_k >i_{k-1},\\
\{i_{k-1},i_k,i_{k+1},\ldots,i_{p}\} & \text{ if }h_k =i_{k-1},\\
\end{cases} \\
J \cap [\ell,n]&=
\begin{cases}
\{j_{k+1},j_{k+2},\ldots,i_{p+1}\} & \text{ if }h_k >j_{k},\\
\{j_{k},j_{k+1},j_{k+2},\ldots,j_{p+2}\} & \text{ if }h_k =j_{k}.\\
\end{cases}
\end{aligned}
$$
From this one can calculate that
$$
\#J \cap [\ell,n] + \#I \cap [\ell,n] - 2\#H\cap [\ell,n] =
\begin{cases}
0 & \text{ if }h_k =j_k > i_{k-1} \text{ or } h_k=i_{k-1}>j_k,\\
+1& \text{ if }h_k=i_{k-1}=j_k.
\end{cases}
$$

\vskip.2in
\noindent
($\Rightarrow$):
Assume  
$\one_{H} - \one_I \,\, \leq_{EL}  \,\, \one_J-\one_H.$
\vskip.1in
\begin{quote}
{\bf Claim:} One cannot have both strict inequalities
$i_{k-1} < h_k < i_k$, nor a strict inequality $i_p < h_{p+1}$.
\end{quote}
\vskip.1in
To see this claim, note that in either case 
($i_{k-1} < h_k < i_k$ or $i_p < h_{p+1}$), it would
imply $h_k \in H \setminus I$.  
Then since $I <_{\interlace} H$, this would imply 
$(\one_{H} - \one_I) \cdot \one_{[h_k,n]} =+1$.  But then $h_k \in H$ and
$H <_{\interlace} J$ implies 
$(\one_J-\one_H) \cdot \one_{[h_k,n]} =0 
<+1=(\one_{H} - \one_I) \cdot \one_{[h_k,n]}$,
a contradiction to our assumption.

By Lemma~\ref{minimal-trapezoid-lemma}~(c) and (d), $I<_{\interlace} H <_{\interlace} J$ implies
$h_k \geq \max(i_{k-1},j_k)$ for $k=1,2,\ldots,p+1$.
We must now show that these are all equalities, not inequalities.
For the sake of contradiction, assume not and pick $k$ {\it maximal} 
such that $h_k > \max(i_{k-1},j_k)$.

The Claim above then forces $k \leq p$ and $h_k=i_k$ 
(else $i_{k-1} < h_k < i_k$ or $k=p+1$ and $i_p < h_{p+1}$).
Then $h_{k+1} > h_k=i_k$ and the maximality of $k$ forces
$h_{k+1}=\max(i_k,j_{k+1})=\max(h_k,j_{k+1})=j_{k+1}$.  And again the Claim
forces $k+1 \leq p$ and $i_{k+1}=h_{k+1} (=j_{k+1})$.

We now repeat this argument to show by induction that for all
$m=k+1,k+2,\ldots$, one has both $m \leq p$ and this triple
coincidence $j_m=h_m=i_m$;  this would contradict finiteness of $p$.
The inductive step again notes that
$h_{m+1} > h_m=i_m$ and maximality of $k$ forces 
$h_{m+1}=\max(i_m,j_{m+1})=\max(h_m,j_{m+1})=j_{m+1}$.  But then the
Claim forces $m+1 \leq p$ and $i_{m+1}=h_{m+1}(=j_{m+1})$,
recreating the inductive hypothesis.
\end{proof}

\begin{proof}[Proof of Theorem~\ref{t:asm_el}]
We first check our edge-labeling $\lambda$ satisfies the two conditions for an EL-labeling:
\begin{itemize}
\item for every interval $[x,y] \subset \Phi_n$, there is a unique maximal chain $(x = x_0 \lessdot x_1 \lessdot \dots \lessdot x_k = y)$, that has {\it weakly rising} labels 
$
\lambda(x_{0},x_{1}) \leq_{\Lambda} \lambda(x_{1},x_{2}) \leq_{\Lambda} \cdots  \leq_{\Lambda} \lambda(x_{k-1},x_{x_k})
$
\item if $x \lessdot z < y$, with $z \neq x_1$, then $\lambda(x,x_1) <_{\Lambda} \lambda(x,z)$. 
\end{itemize}
The  first condition follows by combining Lemma~\ref{minimal-trapezoid-lemma}(b)
and Lemma~\ref{EL-characterization-of-Hmin}, which show that  for any $I <_{\interlace} J$,
the unique maximal chain in the interval $[I,J]$ corresponds to the
$(I,J)$-monotone trapezoid $T_{\min}(I,J)$.
Then the second condition comes from Lemma~\ref{l:min_bottom}.

For the second assertion of the theorem,  it suffices to check that if $T, T'$ are monotone triangles with 
$T' <_W T$, then any of the above EL-shellings, which come from linearly extending the lexicographic
ordering of $<_{EL}$ on edge labels, will have $T'$ earlier than $T$.  By definition of the weak order $<_W$,
it suffices to check this holds when
$T' = T \bubble \pi_i$ for some $i$.  In this case, it follows because Lemma~\ref{l:min_bottom} shows that $T$ will
have lexicographically earlier edge label sequence than $T'$:  the two sequences first differ in
replacing the label $\one_{T_{i+1}}-\one_{T_i}$ with the $<_{EL}$-smaller label $\one_{H_{\min}(T_i,T_{i+2})}-\one_{T_i}$.
\end{proof}

%%%%%%%%%%%%%%%%%%%%%%%%%%%%%%%%%%%%%%%%%%%%%%%
\section{Descents, $h$-vectors and flag $h$-vectors}
\label{s:descents}

Recall from the Introduction the usual {\it descent set} for a permutation $w=(w_1,\ldots,w_n)$ in $\symm_n$
$$
\Des(w):=\{k \in [n-1]: w_k> w_{k+1}\} =\{k \in [n-1]: w \bubble \pi_k =ws_k <_W w \}.
$$
It has a natural extension to monotone triangles $T$, motivated by the weak order $<_W$ and our shelling results.

\begin{definition} \rm \ \\
Define the {\it descent set} $\Des(T)$ for $T=(T_0,T_1,\ldots,T_n)$ in $\MT_n$ by
$$
\begin{aligned}
\Des(T)&:=\{k \in [n-1]: T \bubble \pi_k  <_W T \}\\
& =\{ k \in [n-1]: T_k \neq H_{\min}(T_{k-1},T_{k+1}) \}.
\end{aligned}
$$
\end{definition}

There is another way to define $\Des(T)$.
\begin{lemma}
\label{l:max_min_diff}
For $T$  in  $\MT_n$,  one has
$$
 \Des(T): = \{k \in [n-1]: \text{ there does not exist some }T'\neq T \text{ with }T' \bubble \pi_k=T\}.
 $$
In particular, $T$ is one of the maximal elements of the weak order $<_W$ if and only
if $\Des(T)=[n-1]$.
\end{lemma}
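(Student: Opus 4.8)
The plan is to analyze, for each fixed $k \in [n-1]$, the fiber of the idempotent operator $\pi_k$ over $T$, namely the set of $T'$ with $T' \bubble \pi_k = T$. Two structural facts drive everything. First, since $\pi_k^2 = \pi_k$ (Proposition~\ref{p:0-hecke}), the image of $\pi_k$ coincides with its set of fixed points $\{S : S \bubble \pi_k = S\} = \{S : S_k = H_{\min}(S_{k-1},S_{k+1})\}$; equivalently, $T$ lies in the image of $\pi_k$ if and only if $k \notin \Des(T)$. Second, $\pi_k$ alters only the $k$-th row, so any preimage $T'$ of $T$ must satisfy $T'_m = T_m$ for all $m \neq k$ and must have $k$-th row $T'_k$ interlacing between $T_{k-1}$ and $T_{k+1}$; applying $\pi_k$ then resets that row to $H_{\min}(T_{k-1},T_{k+1})$, forcing $T_k = H_{\min}(T_{k-1},T_{k+1})$.

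These facts split the argument into two cases. If $k \in \Des(T)$, then $T$ is not a fixed point of $\pi_k$, hence not in its image, so there is no $T'$ at all (let alone one with $T' \neq T$) mapping to $T$ under $\pi_k$; thus $k$ lies in the right-hand set. If instead $k \notin \Des(T)$, then $T_k = H_{\min}(T_{k-1},T_{k+1})$, and every monotone triangle $T'$ agreeing with $T$ off row $k$ and having any valid $k$-th row is a preimage of $T$. So here I must produce a second valid $k$-th row: a set $X \neq T_k$ of size $k$ with $T_{k-1} \leq_{\interlace} X \leq_{\interlace} T_{k+1}$, the natural candidate being $X = H_{\max}(T_{k-1},T_{k+1})$ from Remark~\ref{H-max-remark}.

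The crux, and the main obstacle, is therefore the combinatorial claim that for $I \leq_{\interlace} J$ with $\#J = \#I + 2$ one always has $H_{\min}(I,J) \neq H_{\max}(I,J)$, so that at least two interlacing middle rows exist. I would argue by contradiction. Writing $I = \{a_1 < \cdots < a_p\}$ and $J = \{b_1 < \cdots < b_{p+2}\}$, the assumption $H_{\min} = H_{\max}$ collapses each local constraint $\max(a_{m-1},b_m) = \min(a_m,b_{m+1})$ to an equality. The boundary index $m=1$ (using $a_0 = 0$) forces $a_1 = b_1$; a short induction on $m$ then propagates $a_m = b_m$ up to $m = p$, while the top index $m = p+1$ (using $a_{p+1} = \infty$) independently forces $a_p = b_{p+2}$. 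Combining these gives $b_p = b_{p+2}$, contradicting $b_p < b_{p+1} < b_{p+2}$. The case $p = 0$ is immediate, since then $H_{\min} = \{b_1\} \neq \{b_2\} = H_{\max}$.

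Finally, for the ``in particular'' statement I would observe that $T$ fails to be $<_W$-maximal exactly when some $T'' \neq T$ satisfies $T'' \bubble \pi_i = T$ for some $i$: such a $T''$ gives $T = T'' \bubble \pi_i <_W T''$, and conversely any strict relation $T <_W S$ decomposes into single $\pi$-steps, the last of which reaching $T$ furnishes such a $T''$. By the equality just proved, this occurs for some $i$ precisely when $\Des(T) \neq [n-1]$, so $T$ is $<_W$-maximal if and only if $\Des(T) = [n-1]$.
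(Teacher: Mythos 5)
Your proposal is correct and follows essentially the same route as the paper: one direction from idempotency $\pi_k^2=\pi_k$ (so $T$ has a $\pi_k$-preimage only if $k\notin\Des(T)$), and the other by exhibiting the witness $T'$ obtained by replacing row $k$ with $H_{\max}(T_{k-1},T_{k+1})$, reducing everything to the claim that $H_{\min}(I,J)\neq H_{\max}(I,J)$ when $\#J=\#I+2$. The only point of divergence is how that claim is proved: the paper observes that $h_m=h'_m$ forces $i_m=j_m$ or $i_{m-1}=j_{m+1}$ and then counts, using $\#(I\cap J)\leq\#I$, that such coincidences cannot occur for all $\#I+1$ indices; you instead assume all equalities hold and propagate $a_m=b_m$ upward from $m=1$ while the top boundary condition forces $a_p=b_{p+2}$, yielding $b_p=b_{p+2}$, a contradiction. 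Both arguments are valid; yours is slightly more hands-on, the paper's slightly slicker, and you additionally spell out the ``in particular'' statement (which the paper leaves implicit) correctly.
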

\begin{proof}
Since $\pi_k^2=\pi_k$, if there exists $T'$ with $T' \bubble \pi_k =T$, then 
$
T \bubble \pi_k=T' \bubble = T' \bubble \pi_k =T,
$
so $k \not\in \Des(T)$.

Conversely, if $k \not\in \Des(T)$, so that $T \bubble \pi_k=T$, we wish to exhibit at least one $T' \neq T$
having $\pi_k(T')=T$.  From $T \bubble \pi_k=T=(T_0,T_1,\ldots,T_n)$
we know that $T_k=H_{\min}(I,J)$ where $I:=T_{k-1}, J:=T_{k+1}$, so that if we construct
$T'$ from $T$ by replacing $T_k$ with $H_{\max}(I,J)$ as defined in \eqref{H-max-definition}, then
it will certainly have $T' \bubble \pi_k=T$. 

It only remains to show that $T' \neq T$, that is
$H_{\max}(I,J) \neq H_{\min}(I,J)$.  To check this, name elements:
$$
\begin{aligned}
I&=\{i_1 < i_2 <\dots <i_{k-1}\},\\
H_{\min}(I,J) &= \{h_1 < h_2 < \dots < h_{k-1} < h_k\},\\
 H_{\max}(I,J) &= \{h_1' < h_2' < \dots <h_{k-1}'< h_k'\},\\
 J&=\{j_1 < j_2 <\dots <j_{k-1} <j_k <j_{k+1}\}\\
\end{aligned}
$$
Then the formulas defining $H_{\min}(I,J), H_{\max}(I,J)$ are
$
h_m = \max(i_{m-1}, j_m), 
h'_m = \min(i_m, j_{m+1}),
$
implying that $h'_m = h_m$ if and only if $i_m= j_m$ or $i_{m-1} = j_{m+1}$.
Since $\#I \cap J \leq \#I = k-1$, such an equality occurs at most $k-1$ times,
and hence $h_m' \neq h_m$ for at least one $m=1,2,\ldots,k$.
\end{proof}

\begin{remark} \rm \ \\
\label{opposite-action-remark}
Embedded in the previous proof are operators $\pi_k':T \mapsto T'$  on $\MT_n$  for $k=1,2,\ldots,n-1$, 
where $T'$ is obtained from $T$ by replacing $T_k$  
with $T'_k=H_{\max}(T_{k-1},T_{k+1})$.  Because of the relation
between the $H_{\min}$ and $H_{\max}$ constructions
described in Remark~\ref{H-max-remark}, the operators $\{\pi'_k\}_{k=1,2,\ldots,n-1}$ 
satisfy the same braid and quadratic relations
as $\{\pi_k\}$, giving 
a (different) action of the $0$-Hecke monoid $\HHH_n(0)$
on $\MT_n$.  

One can check that this other action, in fact,
extends the {\it (right-)regular action} of 
$\HHH_n(0)$ on itself, when one identifies the monotone triangle
$T(w)$ in $\MT_n$ with $\pi'_w$ in $\HHH_n(0)$.  One could use it
to define a different version of a weak order on $\MT_n$,
having a unique top element $T(w_0)$, but several
different minimal elements.  One reason that we instead chose
the action by $\{\pi_k\}$ and their resulting weak order $<_W$
is so that the monotone triangle $T(e)$ corresponding
to $e=(1,2,\ldots,n)$ in $S_n$ labels the first facet in all of the
shellings.  
\end{remark}

As mentioned in the Introduction, descent sets conveniently encode the {\it flag $f$-vector} 
$f(\Phi_n):=(f_J)_{J \subset [n-1]}$, where $f_J$ counts the number of chains that
pass through the ranks in $J$.
One instead considers the {\it flag $h$-vector} $h(\Phi_n)=(h_J)_{J \subset [n-1]}$, 
defined by these inclusion-exclusion relations:
$$
f_J=\sum_{ I : I \subseteq  J } h_I, \quad \text{ or equivalently, }\quad
h_J=\sum_{ I: I \subseteq J } (-1)^{\#J \setminus I } f_I.
$$
General shelling theory (e.g., Bj\"orner \cite[\S 1(B)]{bjorner1984some}) then implies this combinatorial interpretation for $h_J$:
$$
h_J(\Phi_n)=\# \{ T \in \MT_n: \Des(T)=J \}
$$
The usual $f$-vector $f=(f_{-1},f_0,f_1,\ldots,f_{n-2})$ and $h$-vector $h=(h_0,h_1,\ldots,h_{n-1})$
for $\Delta(\Phi_n\setminus\{\hat{0},\hat{1}\})$
can then be obtained by grouping the terms in $(f_J), (h_J)$ as follows:
$$
\begin{aligned}
f_i =\sum_{ J \in \binom{[n-1]}{i+1}:  } f_J, \quad \text{ and } \quad
h_i &=\sum_{ J  \in \binom{[n-1]}{i}:  } h_J.
\end{aligned}
$$
In particular, $h_i(\Phi_n)=\# \{ T \in \MT_n: \#\Des(T)=i\}$.  See Table~\ref{t1} for the $h$-vector $h(\Phi_n)$
and flag $h$-polynomial for small values of $n$.

\begin{table}
\begin{center}
\begin{tabular}{|c||l|}\hline
$n$ & $h(\Phi_n)=(h_0,h_1,\ldots,h_{n-1})$ \\
\hline \hline
$2$ & $(1,1)$ \\
\hline
$3$ & $(1,4,2)$\\
\hline
$4$ & $(1,11,21,9)$\\
\hline
$5$ & $(1,26,130,192,80)$\\
\hline
$6$ & $(1,57,638,2318,3101,1321)$ \\
\hline
$7$ & $(1, 120, 2773, 21472, 67340, 87616, 39026)$ \\
\hline
$8$ & $(1, 247, 11264, 172222, 1108243, 3260759, 4280764, 2016716)$\\ \hline
\end{tabular}

\vskip.2in

\begin{tabular}{|c||l|}\hline
& \\
$n$ & $\sum_{J \subset [n-1]} h_J(\Phi_n) \,\,x_J$ where $x_J:=\prod_{i \in J} x_i$  \\
&\\
\hline\hline
$2$ & $ x_1 + 1$\\
\hline
$3$ &  $ 2x_1x_2 + 2x_1 + 2x_2 + 1$\\
\hline
$4$ & $ 9 x_1 x_2 x_3 + 7 x_1 x_2 + 7 x_1 x_3 + 7 x_2 x_3 + 3 x_1 + 5 x_2 + 3 x_3 + 1$\\
\hline
$5$ &  $80 x_1 x_2 x_3 x_4 + 52 x_1 x_2 x_3 + 44 x_1 x_2 x_4 + 44 x_1 x_3 x_4+ 52 x_2 x_3 x_4 + 16 x_1 x_2 + 26 x_1 x_3 + 32 x_2 x_3$\\
&$ + 14 x_1 x_4 + 26 x_2 x_4 + 16 x_3 x_4 + 4 x_1 + 9 x_2 + 9 x_3 + 4 x_4 + 1$\\
\hline
$6$ & $1321 x_1 x_2 x_3 x_4 x_5 + 745 x_1 x_2 x_3 x_4 + 562 x_1 x_2 x_3 x_5 + 487 x_1 x_2 x_4 x_5 + 562 x_1 x_3 x_4 x_5 + 745 x_2 x_3 x_4 x_5$\\
& $ + 180 x_1 x_2 x_3 + 251 x_1 x_2 x_4 + 298 x_1 x_3 x_4 + 405 x_2 x_3 x_4 + 120 x_1 x_2 x_5 + 215 x_1 x_3 x_5 + 298 x_2 x_3 x_5$ \\
& $ + 120 x_1 x_4 x_5 + 251 x_2 x_4 x_5 + 180 x_3 x_4 x_5 + 30 x_1 x_2 + 65 x_1 x_3 + 92 x_2 x_3 + 58 x_1 x_4 + 125 x_2 x_4 $\\
& $ + 92 x_3 x_4 + 23 x_1 x_5 + 58 x_2 x_5+ 65 x_3x_5 + 30x_4x_5 + 5x_1 + 14x_2 + 19x_3 + 14x_4 + 5x_5 + 1$\\ \hline
\end{tabular}
\end{center}

\vskip.2in

\caption{\label{t1} 
The $h$-vectors of $\Phi_n$ for $n \leq 8$ and flag $h$-polynomials of $\Phi_n$ for $n \leq 6$.
All data computed using {\tt SAGE}.
}
\end{table}

We remark on some features of this data.
Note the sequence of values  $1,2,9,80, 1321, 39026,2016716$ for
$$
h_{n-1} =\#\{T \in \MT_n: \Des(T)=[n-1]\} = \#\{ \text{ maximal elements in the poset }(\MT_n,<_W) \},
$$
appearing at the right in Table~\ref{t1},
which is not in the Online Encyclopedia of Integer Sequences ({\tt OEIS}).

The data invites comparison with the Boolean algebra $2^{[n]}$, which
has $h$-vector $h(2^{[n]})=(h_0,h_1,\ldots,h_{n-1})$ 
given by the {\it Eulerian numbers}, that is, 
$h_i(2^{[n]})=\#\{w \in \symm_n: \#\Des(w)=i\}$.
The Eulerian numbers are well-behaved in many ways (see Petersen \cite{Petersen}).
For example, they satisfy recurrences and have the {\it symmetry} $h_i = h_{n-1-i}$.
They also have the very strong property that the {\it $h$-polynomial} 
$$
h(2^{[n]},t):=\sum_{i=0}^{n-1} h_i t^i=\sum_{w \in \symm_n} t^{\#\Des(w)}
$$
has only real zeroes.  This implies
{\it log-concavity} $h_i ^2 \geq h_{i+1} h_{i-1}$, which then implies {\it unimodality}, meaning
that there is some $k$ (in this case $k=\lfloor{\frac{n-1}{2}} \rfloor$ works) for which $h_0 \leq h_1 \leq \cdots h_k \geq \cdots \geq h_{n-2} \geq h_{n-1}$.
From the data in Table~\ref{t1}, the reader can check that for $\Phi_n$,
the $h$-polynomial 
$$
h(\Phi_n,t):=\sum_{i=0}^{n-1} h_i t^i=\sum_{T \in \MT_n} t^{\#\Des(T)}
$$ 
is irreducible in $\Q[t]$ with only real zeroes for $n \leq 8$, hence is log-concave for those values.

\begin{question} 
\label{q:log-concavity}
Does $h(\Phi_n,t)$ have only real zeroes?
If not, is its coefficient sequence log-concave, or at least unimodal?
\end{question}

\begin{question}
\label{q:second}
What is the largest entry in the $h$-vector of $\Phi_n$?
Is it always $h_{n-2}$?
\end{question}

%We conclude with a conjecture, suggested to us by Roger Behrend, about enumerative properties of the descent set of an alternating sign matrix.
%Define the \emph{major index} of an alternating sign matrix to be $\maj(A) = \sum_{i \in \Des(T(A))} i$.
%\begin{conjecture}
%\label{c:descent}
%For $k \geq 3$, $|\{A\in \ASM_n \setminus \symm_n: \maj(A) = k\}|$ is a polynomial of degree $k$.
%\end{conjecture}
%
%For $k = 0,1,2$ the quantity in Conjecture~\ref{c:descent} should be 0.
%For $k = 3,4,5,6$, the polynomials appear to be $\binom{n}{3}$, $2\binom{n}{4}$, $4 \binom{n+1}{5}$ and $2\binom{n+1}{6} + 8\binom{n+2}{6}$, respectively.
%These values were first computed by Roger Behrend.

%\begin{conjecture}
%\label{c:concentrate}
%There exists $k(n) = o(n)$ so that $(\sum_{i = n-k(n)}^{n-1} h_i)/\ASM_n \to 1$.
%\end{conjecture}
%Assuming unimodality, Conjecture~\ref{c:concentrate} implies that the largest entry in the $h$-vector is at $h_{n - o(n)}$, which is consistent with Question~\ref{q:second}.
%An interesting extension of this conjecture would be to find the precise order of magnitude of $k(n)$. 
%If the distribution of descents in $\ASM_n$ is a truncated normal random variable, we would expect $k(n) = \Theta(\sqrt{n})$.

%%%%%%%%%%%%%%%%%%%%%%%%%%%%%%%%%%%%%%%%%%%%%%%
\section{Descents as a map to $\QSYM$, and proof of Theorem~\ref{t:alg_map}}
\label{s:qsym}

As described in the Introduction, the map $w \mapsto \Des(w)$ that sends a permutation $w$ in $\symm_n$ 
to its descent set was pleasingly reinterpreted in the work of Malvenuto and Reutenauer \cite{malvenuto1995duality} as
a morphism of Hopf algebras.  We wish to explain here how this extends to the map $T \mapsto \Des(T)$
sending a monotone triangle to its descent set, giving at least an algebra (but not coalgebra) morphism out
of the Hopf algebra of $ASM$s recently defined by Cheballah, Giraudo and Maurice~\cite{cheballah2015hopf}.

Let us start by recalling the algebra structures on quasisymmetric functions, permutations, and $ASM$s.

\begin{definition} \rm \ \\
The  {\it ring of quasisymmetric functions} $\QSym$ can be defined as the subalgebra of the algebra $\Z[[x_1,x_2,\ldots]]$ of formal power series
 that has $\Z$-basis given by the {\it monomial quasisymmetric functions} 
$$
M_\alpha:=\sum_{1 \leq i_1 < i_2 <\ldots <i_k} x_{i_1}^{\alpha_1} \cdots x_{i_k}^{\alpha_k}
$$
as $\alpha=(\alpha_1,\ldots,\alpha_k)$ runs through all (ordered) compositions having $\alpha_i \in \{1,2,\ldots\}$ and
any length $k \geq 0$.
\end{definition}

The ring $\QSym$ was introduced by Gessel \cite{Gessel}.  He observed that if one defines the unitriangularly related
$\Z$-basis of {\it fundamental quasisymmetric functions}
\begin{equation}
\label{fundamental-quasisymm-function-definition}
L_\alpha := \sum_{\beta: \beta \text{ coarsens }\alpha} M_\beta
\end{equation}
then results from Stanley's theory of {\it $P$-partitions} \cite[Cor. 7.19.5]{Stanley-EC2} imply the following expansion
for products of $L_\alpha$'s.  Given a subset $J=\{j_1 < \dots <j_\ell\} \subseteq [n-1]$, define its {\it associated composition} of $n$
to be  
$$
\alpha(J):=(j_1,j_2-j_1,j_3-j_2, \ldots,j_\ell-j_{\ell-1},n-j_{\ell}).
$$
In other words, $\alpha(J)$ is the composition whose partial sums are the elements of $J$.

For $u,v$ in $\symm_a ,\symm_b$, let $u \shuffle v[a]$ be the set of all shuffles $w=(w_1,w_2,\ldots,w_{a+b})$ of the sequences $u=(u_1,\ldots,u_a)$, and
$v[a]=(a+v_1,a+v_2,\ldots,a+v_b)$.
In other words, $w \in \symm_{a+b}$ is in $u \shuffle v[a]$ if $(u_1,\dots,u_a)$ and $(a+v_1,\dots,a+v_b)$ are subsequences of $w$.
\begin{proposition}
Given $u,v$ in $\symm_a ,\symm_b$, 
\begin{equation}
\label{Qsym-shuffle-product-rule}
L_{\alpha(\Des(u))} \cdot L_{\alpha(\Des(v))} = \sum_{w\,\,  \in \,\, u \,\, \shuffle\,\, v[a]} L_{\alpha(\Des(w))}.
\end{equation}
\end{proposition}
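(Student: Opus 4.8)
The plan is to pass to the monomial expansion of Gessel's fundamental quasisymmetric functions and exhibit an explicit value-merging bijection, which is exactly the classical fundamental lemma of $P$-partitions specialized to a disjoint union of two chains. First I would record the monomial expansion that follows from \eqref{fundamental-quasisymm-function-definition}: for any length-$n$ word $\sigma$ with distinct integer entries and descent set $\Des(\sigma)=\{t:\sigma_t>\sigma_{t+1}\}$, one has
$$
L_{\alpha(\Des(\sigma))}=\sum_{\substack{1\le c_1\le c_2\le\cdots\le c_n\\ c_t<c_{t+1}\text{ whenever }t\in\Des(\sigma)}} x_{c_1}x_{c_2}\cdots x_{c_n}.
$$
Two consequences are worth isolating: this expression depends only on $\Des(\sigma)$, so in particular $L_{\alpha(\Des(v))}=L_{\alpha(\Des(v[a]))}$ because adding the constant $a$ to every entry of $v$ preserves its descents; and multiplying the expansions for $u$ and for $v[a]$ rewrites the left side of \eqref{Qsym-shuffle-product-rule} as a sum of monomials $x_{c_1}\cdots x_{c_a}\,x_{d_1}\cdots x_{d_b}$ over all pairs of weakly increasing sequences $(c_1,\ldots,c_a)$ and $(d_1,\ldots,d_b)$ that are strict at the descents of $u$ and of $v$ respectively.

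Next I would build the bijection between these product terms and the terms on the right. Given such a pair, merge the two weakly increasing sequences into a single weakly increasing sequence $(e_1\le\cdots\le e_{a+b})$, recording at each merged slot the letter of $u$ (a value in $\{1,\ldots,a\}$) or of $v[a]$ (a value in $\{a+1,\ldots,a+b\}$) from which it came, and breaking every tie $c_k=d_\ell$ by placing the $u$-letter (the smaller value) first. Reading the recorded letters in merged order yields a word $w$, which lies in $u\shuffle v[a]$ since the $u$-letters and the $v[a]$-letters each retain their internal order. The inverse map takes a pair $(w,(e_t))$ with $w\in u\shuffle v[a]$ and recovers $(c_k)$ and $(d_\ell)$ as the subsequences of $(e_t)$ occupying the $u$-positions and the $v[a]$-positions of $w$. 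Both maps preserve the monomial $\prod_t x_{e_t}=\prod_k x_{c_k}\prod_\ell x_{d_\ell}$, so it remains only to match the constraints.

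The heart of the argument, and the step I expect to need the most care, is checking that the descent/strictness conditions correspond exactly under the merge. In the forward direction I would show that $e_t=e_{t+1}$ forces $t\notin\Des(w)$: if both slots come from the $u$-part then $c_k=c_{k+1}$ forces $k\notin\Des(u)$, giving an ascent $u_k<u_{k+1}$; the $v$-part case is identical; and if the two slots come from different parts, the tie-break places the smaller ($u$-)value first, again an ascent. Contrapositively, every descent of $w$ forces a strict rise of $(e_t)$, so $(e_t)$ is a valid index for $L_{\alpha(\Des(w))}$. For the inverse direction I would show that $(c_k)$ is strict at $\Des(u)$: if $k\in\Des(u)$, then $u_k$ and $u_{k+1}$ occur in positions $t<t'$ of $w$ with $w_t=u_k>u_{k+1}=w_{t'}$, and were $e_t=e_{t'}$ the sandwiched values would all be equal, forcing the segment $w_t,\ldots,w_{t'}$ to be strictly increasing and contradicting $w_t>w_{t'}$; hence $c_k=e_t<e_{t'}=c_{k+1}$, and symmetrically for $(d_\ell)$ at $\Des(v)$. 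Since the two maps are mutually inverse and monomial-preserving, summing over all terms yields \eqref{Qsym-shuffle-product-rule}. I would close by remarking that this correspondence is Stanley's fundamental lemma of $P$-partitions applied to the disjoint union of the two labeled chains read off from $u$ and $v[a]$, whose linear extensions are precisely the shuffles in $u\shuffle v[a]$; the explicit merge above is simply that lemma made concrete.
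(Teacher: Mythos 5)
Your proof is correct and is exactly the argument the paper points to: the paper does not write out a proof of this proposition but attributes it to Stanley's theory of $P$-partitions (citing \cite[Cor.\ 7.19.5]{Stanley-EC2}), and your explicit merge bijection with the tie-break sending equal values to $u$-letters first, together with the two compatibility checks, is precisely that $P$-partition argument for the disjoint union of two labeled chains made concrete. No gaps; the only point worth spelling out a bit more is that the inverse map followed by the merge recovers $w$ itself, which follows because the strictness condition forces $w$ to increase along each constant block of $(e_t)$, matching your tie-break.
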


This was part of Malvenuto and Reutenauer's motivation for the following definition.
\begin{definition} \rm \ \\
The {\it Malvenuto-Reutenauer (Hopf) algebra of permutations} is a graded free abelian
group 
$$
\FQSym=\bigoplus_{n \geq 0} \FQSym_n,
$$
in which $\FQSym_n$ has $\Z$-basis elements $\{\textbf{w}\}_{w \in \symm_n}$.
As an algebra, its multiplication is extended $\Z$-linearly from this rule: for $u,v$ in $\symm_a ,\symm_b$,
\begin{equation}
\label{FQSym-multiplication-rule}
\textbf{u} \cdot \textbf{v} = \sum_{w\,\,  \in \,\, u \,\, \shuffle\,\, v[a]} \textbf{w}
\end{equation}
in which the sum runs over the same set of $w$ as in \eqref{Qsym-shuffle-product-rule}.
\end{definition}

Thus the algebra structure on $\FQSym$ was defined so that this map is a (surjective) algebra morphism: 
\begin{equation} 
\label{phi-on-FQSym-is-an-algebra-map}
\begin{array}{rcl}
\FQSym &\overset{\varphi}{\longrightarrow}& \QSym\\
\textbf{w} &\longmapsto& L_{\alpha(\Des(w))}
\end{array}
\end{equation}

\begin{definition} \rm \ \\
Cheballah, Giraudo and Maurice ~\cite{cheballah2015hopf} embedded $\FQSym$ inside a larger 
graded Hopf algebra 
\begin{equation}
\mathcal{ASM}=\bigoplus_{n \geq 0} \mathcal{ASM}_n,
\end{equation}
whose $n^{th}$-graded component $\mathcal{ASM}_n$ has a 
$\Z$-basis $\{\textbf{A}\}$ indexed by $A$ in $\ASM_n$.  Its algebra structure generalizes that of $\FQSym$ to the following
{\it row-shuffle}\footnote{Actually, in \cite{cheballah2015hopf} the algebra structure uses column shuffles, but this is equivalent to what is described here
after transposing the alternating sign matrices $A \mapsto A^t$.} product.
Given ASMs $A, B$ of sizes $a \times a$ and $b \times b$, define $A\circ b$ to be the $a \times (a+b)$ matrix with first $a$ columns $A$ and last $b$ columns all $0$-vectors.
Likewise, $a \circ B$ is the $b \times (a+b)$ matrix with last $b$ columns $B$ and first $a$ columns all $0$-vectors.
Then define 
\begin{equation}
\label{CGM-product-formula}
\textbf{A} \cdot \textbf{B}= \sum_{C\,\,  \in \,\, (A\circ b) \,\, \shuffle \,\, (a \circ B)} \textbf{C}
\end{equation}
where $C$ runs through all the $(a+b) \times (a+b)$ matrices obtained by shuffling the rows of $A\circ b$ and of
$a \circ B$.   
\end{definition}

\begin{example} \rm \ \\
\label{CGM-product-example}
If $A=\left[\begin{smallmatrix}
0 & + & 0 \\
+ & - & +\\
0 & + & 0
\end{smallmatrix}\right]$
and 
$B=\left[\begin{smallmatrix}
0 & +\\
+ & 0
\end{smallmatrix}\right],
$
then 
$
A \circ b 
=\left[\begin{smallmatrix}
\bf{0} & \bf{+} & \bf{0} & 0 & 0 \\
\bf{+} &\bf{-} & \bf{+} & 0 & 0 \\
\bf{0} & \bf{+} & \bf{0} & 0 & 0 \end{smallmatrix}\right],
$
and
$
a \circ B 
=\left[\begin{smallmatrix}
0& 0& 0& \bf{0} & \bf{+}\\
0& 0& 0& \bf{+} & \bf{0}
\end{smallmatrix}\right].
$
One then has 
$$ 
\begin{aligned}
\textbf{A} \cdot \textbf{B}
&=
\left[\begin{smallmatrix}
0 & + & 0 \\
+ & - & +\\
0 & + & 0
\end{smallmatrix}\right]
\cdot \left[\begin{smallmatrix}
0 & +\\
+ & 0
\end{smallmatrix}\right]
\\
&=
\left[\begin{smallmatrix}
0 & + & 0 & 0 & 0\\
+ & - & + & 0 & 0\\
0 & + & 0 & 0 & 0\\
0 & 0 & 0 & 0 &+\\
0 & 0 & 0 & + & 0
\end{smallmatrix}\right]
+\left[\begin{smallmatrix}
0 & + & 0 & 0 & 0\\
+ & - & + & 0 & 0\\
0 & 0 & 0 & 0 & +\\
0 & + & 0 & 0 & 0\\
0 & 0 & 0 & + & 0
\end{smallmatrix}\right]
+\left[\begin{smallmatrix}
0 & + & 0 & 0 & 0\\
+ & - & + & 0 & 0\\
0 & 0 & 0 & 0 & +\\
0 & 0 & 0 & + & 0\\
0 & + & 0 & 0 & 0
\end{smallmatrix}\right]
+\left[\begin{smallmatrix}
0 & + & 0 & 0 & 0\\
0 & 0 & 0 & 0 & +\\
+ & - & + & 0 & 0\\
0 & + & 0 & 0 & 0\\
0 & 0 & 0 & + & 0
\end{smallmatrix}\right]
+\left[\begin{smallmatrix}
0 & + & 0 & 0 & 0\\
0 & 0 & 0 & 0 & +\\
+ & - & + & 0 & 0\\
0 & 0 & 0 & + & 0\\
0 & + & 0 & 0 & 0
\end{smallmatrix}\right]\\
&\ 
+\left[\begin{smallmatrix}
0 & + & 0 & 0 & 0\\
0 & 0 & 0 & 0 & +\\
0 & 0 & 0 & + & 0\\
+ & - & + & 0 & 0\\
0 & + & 0 & 0 & 0
\end{smallmatrix}\right]
+\left[\begin{smallmatrix}
0 & 0 & 0 & 0 & +\\
0 & + & 0 & 0 & 0\\
+ & - & + & 0 & 0\\
0 & + & 0 & 0 & 0\\
0 & 0 & 0 & + & 0
\end{smallmatrix}\right]
+\left[\begin{smallmatrix}
0 & 0 & 0 & 0 & +\\
0 & + & 0 & 0 & 0\\
+ & - & + & 0 & 0\\
0 & 0 & 0 & + & 0\\
0 & + & 0 & 0 & 0
\end{smallmatrix}\right]
+\left[\begin{smallmatrix}
0 & 0 & 0 & 0 & +\\
0 & + & 0 & 0 & 0\\
0 & 0 & 0 & + & 0\\
+ & - & + & 0 & 0\\
0 & + & 0 & 0 & 0
\end{smallmatrix}\right]
+\left[\begin{smallmatrix}
0 & 0 & 0 & 0 & +\\
0 & 0 & 0 & + & 0\\
0 & + & 0 & 0 & 0\\
+ & - & + & 0 & 0\\
0 & + & 0 & 0 & 0
\end{smallmatrix}\right].
\end{aligned}
$$
\end{example}

Note that when one restricts the product
 formula \eqref{CGM-product-formula} to the elements of the form $\textbf{w}:=\textbf{{A(w)}}$ where $A(w)$ is the
permutation matrix corresponding to $w^{-1}$, it agrees with the multiplication rule for $\textbf{u} \cdot \textbf{v}$
given in \eqref{FQSym-multiplication-rule}.
We also wish to recast the formula \eqref{CGM-product-formula} in terms of monotone triangles.  
The following proposition is straightforward
using the bijection $\ASM_n \rightarrow \MT_n$ described in the Introduction.

\begin{proposition}
\label{row-shuffles-in-terms-of-triangles}
Fix $A, B$ in $\ASM_a, \ASM_b$, with corresponding monotone
triangles $T(A), T(B)$ in $\MT_a, \MT_b$.
Let $C$  in   $(A\circ b) \,\, \shuffle \,\, (a \circ B)$
have 
\begin{itemize}
\item 
$S \subset [a+b]$ the $a$-element subset indexing the rows of $C$ that come from
$A\circ b$, and
\item
$[a+b] \setminus S$ the $b$-element subset indexing the rows of $C$ that come from $a \circ B$.
\end{itemize}
Then $T(C)$ in $\MT_{a+b}$ has as its $k^{th}$ row the set
$$
T(C)_k=T(A)_i \sqcup (a+T(B))_j,
$$
where 
\begin{itemize}
\item
$i=\#S \cap [k]$, and
\item
$j=\#([a+b] \setminus S)\cap [k] \,\, \left( = k-i\right) $.
\end{itemize}
\end{proposition}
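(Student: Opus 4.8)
The plan is to unwind the definition of the bijection $\ASM_{a+b} \to \MT_{a+b}$ recalled in the Introduction, namely that $\one_{T(C)_k}$ is the componentwise sum of the first $k$ rows of $C$. So I would compute this partial row-sum directly and read off the corresponding subset of $[a+b]$. As a preliminary remark (ensuring $T(C)$ is even defined) I would note that $C$ is genuinely an alternating sign matrix: padding an alternating vector with zeros on either end keeps it alternating, which handles the rows of $C$; and each column of $C$ is obtained from a column of $A$ or of $B$ by inserting extra zero entries, which again preserves the alternating property.

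The argument rests on two structural features of the matrices being shuffled. First, the rows of $A \circ b$ are supported in columns $1,\ldots,a$, where their partial sums reproduce $A$, while the rows of $a \circ B$ are supported in columns $a+1,\ldots,a+b$, where their partial sums reproduce $B$ shifted into the last $b$ coordinates; in particular the two families of rows have disjoint column supports. Second --- and this is the only point requiring a moment's care --- since a shuffle preserves the relative order of the rows within each factor, the rows of $C$ indexed by $S \cap [k]$ are exactly the first $i:=\#(S \cap [k])$ rows of $A \circ b$, and those indexed by $([a+b]\setminus S)\cap[k]$ are exactly the first $j:=k-i$ rows of $a \circ B$.

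I would then add up the first $k$ rows of $C$ block by block. Because of the disjoint supports, in columns $1,\ldots,a$ the sum equals the sum of the first $i$ rows of $A$, which by definition of the bijection is $\one_{T(A)_i}$; in columns $a+1,\ldots,a+b$ it equals the sum of the first $j$ rows of $B$ placed in the last $b$ coordinates, namely $\one_{a+T(B)_j}$. Since $\{1,\ldots,a\}$ and $\{a+1,\ldots,a+b\}$ are disjoint, these combine to give $\one_{T(C)_k} = \one_{T(A)_i} + \one_{a+T(B)_j}$, the indicator vector of the disjoint union $T(A)_i \sqcup (a+T(B))_j$. Here I also use that adding the constant $a$ to a monotone triangle yields a monotone triangle, so $(a+T(B))_j = a + T(B)_j$.

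I expect no serious obstacle: the content is entirely the order-preservation observation identifying the selected rows as the initial segments of each factor, together with the elementary bookkeeping $i+j=k$ and the disjointness of the two column blocks that lets the sum split into the $A$-part and the $B$-part.
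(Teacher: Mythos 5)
Your proposal is correct and is precisely the argument the paper has in mind: the paper gives no written proof, asserting only that the proposition is ``straightforward using the bijection $\ASM_n \rightarrow \MT_n$ described in the Introduction,'' and your computation of the partial row-sums of $C$, split across the two disjoint column blocks using the order-preservation of the shuffle, is exactly that straightforward verification. No gaps.
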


\begin{example} \rm \ \\
For $A=\left[\begin{smallmatrix}
0 & + & 0 \\
+ & - & +\\
0 & + & 0
\end{smallmatrix}\right]$
and 
$B=\left[\begin{smallmatrix}
0 & +\\
+ & 0
\end{smallmatrix}\right]
$
as in  Example~\ref{CGM-product-example},
one has 
$T(A)=\xymatrix@R=.01in@C=.01in{
*\txt<3pc>{2\\1 3\\1 2 3}}$
and 
$a+T(B)=\xymatrix@R=.01in@C=.01in{
*\txt<2pc>{5\\4 5}}$
Hence the terms $\textbf{C}$ appearing in the product $\textbf{A} \cdot \textbf{B}$ correspond to these monotone triangles $T(C)$: 

\begin{center}
\begin{tabular}{|c|c|c|c|c|c|}\hline
$S$ & $\{1,2,3\}$ &$\{1,2,4\}$ &$\{1,2,5\}$ &$\{1,3,4\}$ &$\{1,3,5\}$ \\ \hline\hline
 & & & & & \\ 
$T(C)$&
\xymatrix@R=.01in@C=.01in{
*\txt<4pc>{2\\1 3\\1 2 3\\1 2 3 5}}
& \xymatrix@R=.01in@C=.01in{
*\txt<4pc>{2\\1 3\\1 3 5\\1 2 3 5}}
&\xymatrix@R=.01in@C=.01in{
*\txt<4pc>{2\\1 3\\1 3 5\\1 3 4 5}}
&\xymatrix@R=.01in@C=.01in{
*\txt<4pc>{5\\2 5\\1 3 5\\1 2 3 5}}
&\xymatrix@R=.01in@C=.01in{
*\txt<4pc>{5\\2 5\\1 3 5\\1 3 4 5}}\\ 
 & & & & & \\ \hline
\end{tabular}
\vskip.1in
\begin{tabular}{|c|c|c|c|c|c|}\hline
$S$ & $\{1,4,5\}$ &$\{2,3,4\}$ &$\{2,3,5\}$ &$\{2,4,5\}$ &$\{3,4,5\}$ \\ \hline\hline
 & & & & & \\ 
$T(C)$&\xymatrix@R=.01in@C=.01in{
*\txt<4pc>{2\\2 5\\2 4 5\\1 3 4 5}}
&\xymatrix@R=.01in@C=.01in{
*\txt<4pc>{5\\2 5\\1 3 5\\1 2 3 5}}
&\xymatrix@R=.01in@C=.01in{
*\txt<4pc>{5\\2 5\\1 3 5\\1 3 4 5}}
&\xymatrix@R=.01in@C=.01in{
*\txt<4pc>{2\\2 5\\2 4 5\\1 3 4 5}}
&\xymatrix@R=.01in@C=.01in{
*\txt<4pc>{2\\4 5\\2 4 5\\1 3 4 5}}\\ 
 & & & & & \\ \hline
\end{tabular}
\end{center}
\end{example}

Recall the statement of the theorem.
\vskip.1in
\noindent
{\bf Theorem} \ref{t:alg_map}.
{
The map $\FQSym \overset{\varphi}{\longrightarrow} \QSym$ in \eqref{FQsym-to-QSym-map} 
extends to an algebra (but not a coalgebra) morphism 
$$
\begin{array}{rcl}
\mathcal{ASM} &\overset{\varphi}{\longrightarrow}& \FQSym\\
\textbf{A} &\longmapsto& L_{\alpha(\Des(A))}
\end{array}
$$ 
where $\Des(A)=\Des(T(A))$ for $A$ in $\ASM_n$ is the descent set of its monotone triangle $T(A)$.
}
\vskip.1in

\begin{proof}[Proof of Theorem~\ref{t:alg_map}]
Given $A, B$ in $\ASM_a, \ASM_b$, 
we claim that the multiset of descent sets $\{\Des(T(C))\}$ as $C$ runs through the elements
of $(A \circ b) \,\, \shuffle (a \circ B)$
depends only upon the descent sets $\Des(T(A)), \Des(T(B))$, not on $A, B$ themselves.  Assuming this claim for the
moment, one finishes the proof by picking arbitrary $u, v$ in $\symm_a, \symm_b$
having $\Des(u)=\Des(T(A))$ and $\Des(v)=\Des(T(B))$, and calculating
$$
\begin{aligned}
\varphi(\textbf{A} \cdot \textbf{B}) 
=\sum_{C \in (A \circ b) \,\, \shuffle \,\, (a \circ B)} L_{\alpha(\Des(T(C)))} 
&=\sum_{w \in u \shuffle v} L_{\alpha(\Des(w))} \\
&=L_{\alpha(\Des(u))} L_{\alpha(\Des(v))}  
=L_{\alpha(\Des(A))} L_{\alpha(\Des(B))} 
=\varphi(\textbf{A}) \varphi(\textbf{B}). 
\end{aligned}
$$
Here the second equality used the claim, while the third equality used  \eqref{phi-on-FQSym-is-an-algebra-map}.

To prove the claim, note that each $C$ in $(A \circ b) \,\, \shuffle (a \circ B)$
is determined by the $a$-subset $S \subset [a+b]$ indexing the rows of $C$ that come from $A \circ b$.
We give rules in cases below that decide whether some 
$k \in [a+b-1]$ lies in $\Des(T(C))$, 
based only on the subset $S$ and the descent sets $\Des(T(A))$ and $\Des(T(B))$, not
on $A, B$ themselves.  As notation, let $i:=\#S \cap [k-1], j:=\#(([a+b] \setminus S) \cap [k-1]))$, 
and name these elements:
$$
\begin{aligned}
T(A)_{i+2}=\{a_1 < \cdots < a_i < a_{i+1} < a_{i+2} \},\\
(a+T(B))_{j+2}=\{b_1 < \cdots < b_j < b_{j+1} < b_{j+2} \}.\\
\end{aligned}
$$
Note that deciding whether $k$ lies in $\Des(T(C))$ simply
means checking whether any of the entries of $T_k'$,
where $T':=(T(C) \bubble \pi_k)_k=H_{\min}(T(C)_{k-1},T(C)_{k+1})$, {\it differs} from
the corresponding entry of $T(C)_k$, when computed via
the formula \eqref{Hmin-formula} as the maximum of its two 
neighboring entries to the northwest and southwest.

\vskip.1in
\noindent
{\sf Case 1.} Both $k, k+1$ lie in $S$.\\
In this case, Proposition~\ref{row-shuffles-in-terms-of-triangles} implies
that $(T(C)_{k-1}, T(C)_k, T(C)_{k+1})$ look like this:
$$
\begin{array}{cccccccccccccc}
   &       &a_1   &\cdots&a_i    &       &b_1      &\cdots   &b_j   &      &   \\
   &a_1    &\cdots&a_{i} &       &a_{i+1}&         &   b_1   &\cdots&b_j   &   \\
a_1&\cdots &a_{i} &      &a_{i+1}&       &a_{i+2}  &         &b_1   &\cdots&b_j
\end{array}
$$
Each of the entries $b_m$ in $T(C)_k$ equals its northwest neighbor, so is
unchanged in $T(C) \bubble \pi_k$.  This implies that $k \in \Des(T(C))$ if and only if $k \in \Des(T(A))$.

\vskip.1in
\noindent
{\sf Case 2.}  Both $k, k+1$ lie in $[a+b] \setminus S$.\\
Here $(T(C)_{k-1}, T(C)_k, T(C)_{k+1})$ look like this:
$$
\begin{array}{cccccccccccccc}
   &       &a_1   &\cdots&a_i    &       &b_1      &\cdots   &b_j   &      &   \\
   &a_1    &\cdots&a_{i} &       &b_1&   \cdots   &b_j&     &b_{j+1}   &   \\
a_1&\cdots &a_{i}        &  &b_1&    \cdots  &b_j   & &b_{j+1}& &b_{j+2}
\end{array}
$$
Similarly to Case 1, each entry $a_m$ in $T(C)_k$ equals its southwest neighbor, so is
unchanged in $T(C) \bubble \pi_k$.  This implies $k \in \Des(T(C))$ if and only if $k \in \Des(T(B))$.

\vskip.1in
\noindent
{\sf Case 3.}  $k$ lies in $S$, but $k+1$ lies in $[a+b] \setminus S$.\\
Here $(T(C)_{k-1}, T(C)_k, T(C)_{k+1})$ look like this:
$$
\begin{array}{cccccccccccccc}
   &       &a_1   &\cdots&a_i    &       &b_1      &\cdots   &b_j   &      &   \\
   &a_1    &\cdots&a_{i} &       &a_{i+1}&         &   b_1   &\cdots&b_j   &   \\
a_1&\cdots &a_{i}        &  &a_{i+1}&    &b_1   &\cdots &b_j& &b_{j+1}
\end{array}
$$
We claim that in this case, $k \not\in \Des(T(C))$, since
each entry $a_m$ of $T(C)_k$ equals its southwest neighbor, while
each entry $b_m$ of $T(C)_k$ equals its northwest neighbor.

\vskip.1in
\noindent
{\sf Case 4.}  $k+1$ lies in $S$, but $k$ lies in $[a+b] \setminus S$.\\
Here $(T(C)_{k-1}, T(C)_k, T(C)_{k+1})$ look like this:
$$
\begin{array}{cccccccccccccc}
   &       &a_1   &\cdots&a_i    &       &b_1      &\cdots   &b_j   &      &   \\
   &a_1    &\cdots&a_{i} &       &b_1& \cdots   &b_j& &b_{j+1}   &   \\
a_1&\cdots &a_{i}        &  &a_{i+1}&    &b_1   &\cdots &b_j& &b_{j+1}
\end{array}
$$
In this case $k \in \Des(T(C))$, since  
the entry $b_1$ of $T(C)_k$ has $b_1 > a \geq \max(a_i,a_{i+1})$.

To see that  $\textbf{A} \overset{\varphi}{\longmapsto} L_{\alpha(\Des(A))}$
is not a coalgebra morphism, for example,
one can check from the coproduct formula of Cheballah, Giraudo and Maurice \cite[(1.3.5)]{cheballah2015hopf} that the alternating sign matrix 
$
A = \left[\begin{smallmatrix}
0 & +1 & 0 \\
+1 & -1 & +1\\
0 & +1 & 0
\end{smallmatrix}\right]
$
has coproduct $\Delta(\textbf{A}) = 1 \otimes \textbf{A} + \textbf{A} \otimes 1$,
that is, $\textbf{A}$ is primitive.  Meanwhile, its image $\varphi(\textbf{A})=L_{(1,1,1)}$
has 
$$
\Delta(\varphi(\textbf{A}))= \Delta(L_{(1,1,1)})
=1 \otimes L_{(1,1,1)} 
+ L_{(1)} \otimes L_{(1,1)} 
+ L_{(1,1)} \otimes L_{(1)}
+ L_{(1,1,1)} \otimes 1,
$$
which is not the same as
$
(\varphi \otimes \varphi) (\Delta(\textbf{A})) = 1 \otimes L_{(1,1,1)} 
+ L_{(1,1,1)} \otimes 1.
$
That is, $\varphi(\textbf{A})$ is {\it not} primitive.
\end{proof}

\begin{remark} \rm \ \\
It is well-known, and not hard to see (e.g., as a special case of \cite[Thm. 7.19.7]{Stanley-EC2}),
that applying $\varphi$ to the sum of all of the basis elements $\{\textbf{w}\}_{w \in S_n}$ for 
$\FQSym_n$ gives a readily-identifiable symmetric function
$$
\varphi\left( \sum_{w \in \symm_n} \textbf{w} \right)
=\sum_{w \in \symm_n} L_{\alpha(\Des(w))}
=(x_1+x_2+\cdots)^n.
$$

This fails for $\ASM_n$, e.g., the data in Table~\ref{t1} 
for $n=4$ together with \eqref{fundamental-quasisymm-function-definition} shows that
$$
\begin{aligned}
\varphi\left( \sum_{A \in \ASM_n} \textbf{A} \right)
&=\sum_{A \in \ASM_4} L_{\alpha(\Des(T(A)))} \\
&=L_{(4)}
+3 L_{(1,3)}
+5 L_{(2,2)}
+3 L_{(3,1)}
+7 L_{(1,1,2)}
+7 L_{(1,2,1)}
+7 L_{(2,1,1)}
+9 L_{(1,1,1,1)} \\
&=M_{(4)}
+4 M_{(1,3)}
+6 M_{(2,2)}
+4 M_{(3,1)}
+16 M_{(1,1,2)}
+14 M_{(1,2,1)}
+16 M_{(2,1,1)}
+42 M_{(1,1,1,1)} \\
\end{aligned}
$$
which is not a symmetric function, because its coefficient on $M_\alpha$ is
not constant for all compositions $\alpha$ within the same rearrangement class.
It would be interesting to find natural subcollections $\{ A \}$ 
of $\ASM_n$, not contained entirely in $\symm_n$,
for which $\varphi\left( \sum_A \textbf{A} \right)$ is a symmetric function.

\end{remark}

%%%%%%%%%%%%%%%%%%%%%%%%%%%%%%%%%%%%%%%%%%%%%%%
\section{Poset properties of weak order on $\MT_n$}
\label{pathology-section}

The weak order $<_W$ on the symmetric group $\symm_n$ has many pleasant poset-theoretic properties:
\begin{itemize}
\item It has bottom and top elements 
$\hat{0}=e=(1,2,\ldots,n-1,n)$ and $\hat{1}=w_0=(n,n-1,\ldots,2,1)$.
\item It is a {\it lattice}.
\item It is {\it ranked} with rank function
given by the cardinality $\#\Inv(w)$ of the {\it (left-)inversion set}
of $w$:
$$
\Inv(w)=\{ (w_j,w_i): 1 \leq i < j \leq n \text{ and } w_i > w_j \}.
$$
\item It has an encoding via {\it inclusion} of these (left-)inversion sets:
$u <_W v$ if and only if $\Inv(u) \subset \Inv(v)$.
\item The {\it M\"obius function} $\mu(u,v)$ for $u <_W v$ only takes on values in $\{0,+1,-1\}$.
\item More precisely, the {\it homotopy type} of the order complex $\Delta(u,v)$ 
of any of its open intervals $(u,v)$
is {\it contractible} or {\it homotopy-spherical}.  Specifically, one can phrase this in terms of
$\HHH_n(0)$-action on $S_n$ as follows: $\Delta(u,v)$ is contractible unless
$u = v \bubble \pi_{w_0(J)}$ for some subset $J \subset \Des(u)$, in which case, $\Delta(u,v)$
is homotopy-equivalent to a $(\#J-2)$-dimensional sphere; 
see Bj\"orner \cite[Theorem 6]{bjorner-orderings}.
\end{itemize}

\vskip.1in
Only a few of these properties extend to the weak order $<_W$ to $\MT_n$.
It is still true that $(\MT_n,<_W)$ has a bottom element 
$\hat{0}=T(e)=(\varnothing,\{1\},\{1,2\},\{1,2,3\},\ldots,[n])$,
but it no longer has a top element $\hat{1}$, as there are
many maximal elements.

Since $\MT_n$ is finite, and has no top element, it cannot be a {\it lattice},
but it is also true that its intervals fail to be lattices.  For example,
the lower interval shown on the left in Figure~\ref{f1} is not a lattice, because, for example,
$$
\xymatrix@R=.01in@C=.01in{
*\txt<3pc>{1\\1 3\\1 2 3}}
\text{ and }
\xymatrix@R=.01in@C=.01in{
*\txt<3pc>{2\\1 2\\1 2 3}}
\text{ 
do not have a least upper bound since both 
}
\xymatrix@R=.01in@C=.01in{
*\txt<3pc>{2\\1 3\\1 2 3}}
\text{ and }
\xymatrix@R=.01in@C=.01in{
*\txt<3pc>{3\\2 3\\1 2 3}}
$$
are minimal upper bounds.
Note that this same lower interval is not ranked since there are maximal chains of lengths four and five.

\begin{figure}
$$
\xymatrix@R=.2in@C=.3in{ 
&  &*\txt<3pc>{  3  \\ 2 3 \\1 3 4}\ar^{\pi_1}[dr]\ar^{\pi_3}[dd]\ar_{\pi_2}[ddll]& & \\
%%%
& & &*\txt<3pc>{  2  \\ 2 3 \\1 3 4}\ar_{\pi_2}[d]\ar^{\pi_3}[ddrr]& \\
%%%
*\txt<3pc>{  3  \\ 1 3 \\1 3 4}\ar_{\pi_3}[d]\ar^<{\pi_1}[drr]
& &*\txt<3pc>{  3  \\ 2 3 \\1 2 3}\ar^{\pi_1}[drrr]\ar_<{\pi_2}[dll]
 & *\txt<3pc>{  2  \\ 1 3 \\1 3 4}\ar_{\pi_3}[d]\ar_{\pi_1}[dl]&\\
%%%
*\txt<3pc>{  3  \\ 1 3 \\1 2 3}\ar_{\pi_1}[drr]&&
 *\txt<3pc>{  1  \\ 1 3 \\1 3 4}\ar_{\pi_3}[d]&*\txt<3pc>{  2  \\ 1 3 \\1 2 3}\ar^{\pi_1}[dl] \ar_{\pi_2}[dr]& &*\txt<3pc>{  2  \\ 2 3 \\1 2 3} \ar^{\pi_2}[dl]\\
%%%
& & *\txt<3pc>{  1  \\ 1 3 \\1 2 3}\ar_{\pi_2}[dr]& &*\txt<3pc>{  2  \\ 1 2 \\1 2 3}\ar^{\pi_1}[dl]\\
%%%
& & & *\txt<3pc>{  1  \\ 1 2 \\1 2 3}& & 
} \quad
%}
\xymatrix@R=.3in@C=.01in{ 
 y=&*\txt<3pc>{  3  \\ 2 3 \\1 3 4}\ar^{\pi_1}[drr]\ar^{\pi_2}[dd]\ar_{\pi_3}[ddl]& & \\
 & & &*\txt<3pc>{  2  \\ 2 3 \\1 3 4}\ar^{\pi_2}[d] \\
*\txt<3pc>{  3  \\ 2 3 \\1 2 3}\ar_{\pi_2}[d]&*\txt<3pc>{  3  \\ 1 3 \\1 3 4}\ar^{\pi_3}[dl]\ar^{\pi_1}[dr]& & *\txt<3pc>{  2  \\ 1 3 \\1 3 4}\ar^{\pi_3}[d]\ar_{\pi_1}[dl]\\
*\txt<3pc>{  3  \\ 1 3 \\1 2 3}\ar_{\pi_1}[drr]& & *\txt<3pc>{  1  \\ 1 3 \\1 3 4}\ar_{\pi_3}[d]&*\txt<3pc>{  2  \\ 1 3 \\1 2 3}\ar^{\pi_1}[dl] \\
 &x= & *\txt<3pc>{  1  \\ 1 3 \\1 2 3}& 
}
$$
\caption{
\label{f1}
An interval of weak order in $\MT_4$ that is not a lattice, and a subinterval within it.}
\end{figure}
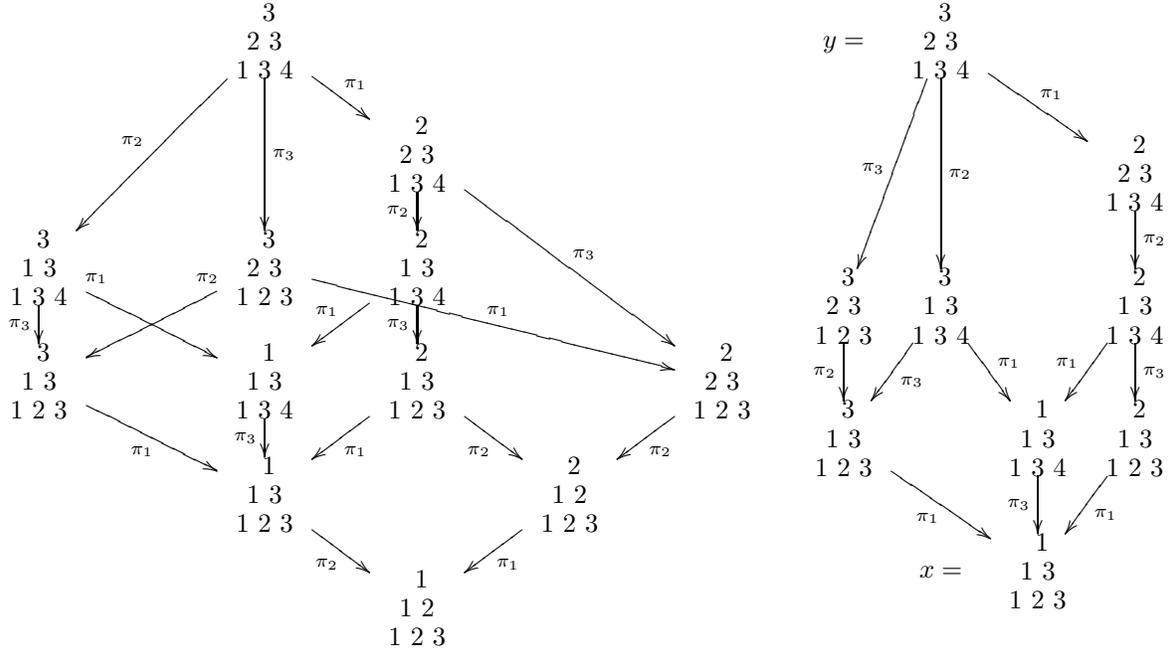

Alternating sign matrices $A = (a_{ij})$ have a well-established \emph{inversion number} 
$
\inv{A}:=\sum_{i<k, \,\, j > \ell} a_{ij} a_{k\ell},
$
introduced by Mills, Robbins and Rumsey~\cite[p344]{mills1983alternating}, 
which generalizes the rank function $\#\Inv(w)$ for $(\symm_n, <_W)$
of permutations.  However, it is not clear that it relates to chains
in the weak order $(\MT_n, <_W)$.
For example, one might hope that the length of the shortest saturated chain 
from $\hat{0}$ to $T$ in weak order might correspond to the inversion number
of the alternating sign matrix of $T$.  However, Roger Behrend noted that this fails for the first time in $\MT_4$, where one can check that
$$
T = \xymatrix@R=.5in@C=.01in{*\txt<4pc>{   3   \\  2 4  \\ 1 3 4}}
\quad \leftrightarrow \quad
A = \left[\begin{smallmatrix}
0 & 0 & + & 0\\
0 & + & - & +\\
+ & - & + & 0\\
0 & + & 0 & 0
\end{smallmatrix}
\right], \quad \inv{A} = 5
$$
but the shortest saturated chain from $\hat 0$ to $T$ has length 4.
Additionally, in $\MT_5$ one can check that 
$$
T= \xymatrix@R=.5in@C=.01in{*\txt<4pc>{   3   \\  3 4  \\ 1 4 5 \\1 2 4 5}
} \quad \leftrightarrow \quad
A=\left[
\begin{smallmatrix}
0&0&+&0&0\\
0&0&0&+&0\\
+&0&-&0&+\\
0&+&0&0&0\\
0&0&+&0&0
\end{smallmatrix}
\right], \quad \inv{A}=5
$$
but all saturated chains in the weak order 
from $\hat{0}$ to $T$ have length at least $6$.

\begin{question}
Is there a generalization of the notion of the (left-)inversion set $\Inv(w)$
for permutations to an inversion set $\Inv(T)$ for monotone triangles,
encoding the weak order $(\MT_n,<_W)$ via inclusion, that is,
$T <_W T'$ if and only if $\Inv(T) \subset \Inv(T')$?
\end{question}

In spite of some of the above shortcomings, the M\"obius function and homotopy type of
open intervals in $(\MT_n,<_W)$ may be just as simple to describe as for weak order on $\symm_n$.

\begin{conjecture}
\label{mobius-conjecture}
For two monotone triangles $T' \leq_W T$, 
the order complex $\Delta(T',T)$ of their open interval in $<_W$ is contractible unless
$T'=T \bubble \pi_{w_0(J)}$ for some  $J \subset \Des(T)$, namely,
$J:=\{m: T'_m \neq T_m\}$,
in which case $\Delta(T',T)$ is homotopy
equivalent to a $(\# J-2)$-dimensional sphere.
\end{conjecture}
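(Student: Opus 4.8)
The plan is to follow the template of Bj\"orner's homotopy computation for weak order on $\symm_n$ \cite[Theorem 6]{bjorner-orderings}, but to replace the group-translation and lattice tricks available there (both of which fail on $\MT_n$, cf.\ Figure~\ref{f1}) by the local $H_{\min}$/$H_{\max}$ technology of Section~\ref{s:interlacing}. First I would pin down the shape of the interval. Since $<_W$ refines the componentwise order $<_B$ (Remark~\ref{why-called-weak-remark}), every $S$ with $T' \leq_W S \leq_W T$ satisfies $T'_m \leq_{\comp} S_m \leq_{\comp} T_m$ for all $m$, so $S_m = T_m = T'_m$ for every $m \notin J$, where $J := \{m : T'_m \neq T_m\}$. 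Thus the interval is supported entirely on the rows indexed by $J$, and by Proposition~\ref{Hecke-action-versus-minimum-triangles} its componentwise-least candidate bottom is the parabolic contraction $T \bubble \pi_{w_0(J)}$. This naturally splits the problem into the \emph{boundary case} $T' = T \bubble \pi_{w_0(J)}$ with $J \subseteq \Des(T)$, and the \emph{interior case}, in which $T'$ is not componentwise-minimal among triangles agreeing with $T$ off $J$.

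In the interior case I would prove contractibility by collapsing along a pivot. Because $T'$ is not the componentwise minimum in its class, Lemma~\ref{minimal-trapezoid-lemma}(b) forces some $m \in J$ with $T'_m \neq H_{\min}(T'_{m-1},T'_{m+1})$, that is, $m \in \Des(T') \cap J$. Using the commuting pair of operators $\pi_m$ and $\pi'_m$, which lower and raise row $m$ to $H_{\min}$ and $H_{\max}$ of its neighbors (see Remark~\ref{opposite-action-remark}), I would build an acyclic matching on the order complex of the open interval $(T',T)$ that toggles row $m$, leaving no critical cell and hence forcing contractibility. Equivalently, $S \mapsto S \bubble \pi_m$ is an order-preserving idempotent that one promotes to a deformation retraction onto its fixed-point set.

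In the boundary case I would establish $(T',T) \simeq S^{\#J-2}$ through the Crosscut Theorem. The coatoms of $[T',T]$ are exactly $\{T \bubble \pi_m : m \in J\}$, since for $m \in \Des(T) \setminus J$ the triangle $T \bubble \pi_m$ changes a row outside $J$ and so leaves the interval. The crux is to verify that the meet of any subcollection $\{T \bubble \pi_m : m \in K\}$ exists inside the interval and equals the parabolic contraction $T \bubble \pi_{w_0(K)}$; granting this, and noting that the join of all the coatoms is $T$, the crosscut complex of the coatoms becomes the boundary of a $(\#J-1)$-simplex, which is the sphere of the asserted dimension. An induction on $\#J$ that peels $T$ off through its coatoms is the natural way to organize simultaneously the meet computation and the sphere identification.

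The main obstacle is precisely the non-lattice, non-ranked pathology exhibited in Figure~\ref{f1}. A single-row toggle may miss those $S$ whose pivot row sits strictly between $H_{\min}$ and $H_{\max}$ of its neighbors, so proving that the interior matching is complete and acyclic, and separately proving that coatom-meets exist in the boundary case, both demand control that the group structure no longer supplies; the degenerate possibility that $T'$ is minimal in its class yet the differing set escapes $\Des(T)$ must also be absorbed into the contractible case. The cleanest uniform resolution would be to upgrade the $\Alt_n$-valued EL-labeling of $\Phi_n$ from Theorem~\ref{t:asm_el} to an EL- or CL-labeling of $(\MT_n,<_W)$ itself; the whole conjecture would then reduce to the purely combinatorial assertion that an interval $[T',T]$ carries a unique falling maximal chain exactly when $T' = T \bubble \pi_{w_0(J)}$, its length furnishing the sphere dimension \cite{bjorner1980shellable, bjorner1984some}.
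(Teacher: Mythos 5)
First, a point of order: the statement you set out to prove is Conjecture~\ref{mobius-conjecture}, which the paper leaves \emph{open}; the only evidence offered for it is the example that follows it (the open interval $(\hat{0},y)$ of Figure~\ref{f1} is a circle, while $(x,y)$ is contractible). So there is no proof in the paper to measure yours against, and your write-up must be judged as a programme. As such it is sensible and consistent with everything here: the opening reduction is correct (since $<_W$ refines the componentwise order by Remark~\ref{why-called-weak-remark}, every element of $[T',T]$ agrees with $T$ on the rows outside $J$), the dichotomy between ``$T'$ is the parabolic contraction $T \bubble \pi_{w_0(J)}$'' and ``$T'$ has a descent inside $J$'' is the right one by Proposition~\ref{Hecke-action-versus-minimum-triangles} and Lemma~\ref{minimal-trapezoid-lemma}, and one can in fact verify your claim that the coatoms of $[T',T]$ in the boundary case are exactly $\{T \bubble \pi_m : m \in J\}$. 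But the two steps that would actually compute homotopy types are both unproved, and they sit precisely where the failure of the lattice and rank properties bites.

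Concretely: in the interior case, promoting $S \mapsto S \bubble \pi_m$ to a deformation retraction of the \emph{open} interval $(T',T)$ requires (i) that $\pi_m$ be order-preserving for $<_W$ on $\MT_n$, which is established nowhere (on $\symm_n$ this follows from the exchange condition, which has no analogue here), and (ii) that the image stay inside the interval --- in your setup $m \in \Des(T')$, so $T'$ itself is moved by $\pi_m$, and it is not clear that $S \bubble \pi_m \geq_W T'$ for $S$ in the open interval, nor that the fixed-point set of $\pi_m$ within the interval is itself contractible. In the boundary case, the Crosscut Theorem in the form you invoke presupposes that arbitrary sets of coatoms have meets in $[T',T]$, while Figure~\ref{f1} exhibits intervals of $(\MT_4,<_W)$ that are not lattices; your identity $\bigwedge_{m \in K}\left(T \bubble \pi_m\right) = T \bubble \pi_{w_0(K)}$ is exactly the unproved heart of the conjecture, not a lemma you may quote. (Your fallback of a CL-labeling of $(\MT_n,<_W)$ would moreover have to be the non-pure version of lexicographic shellability, since these intervals are not ranked, and you would still owe the count of falling chains in each interval.) None of this is a wrong turn --- it is a reasonable plan, and you candidly flag both obstacles --- but as written it establishes nothing beyond what the paper already asserts as a conjecture.
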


Conjecture~\ref{mobius-conjecture} would imply that $\mu(T',T)=0$ in the contractible
case, and $(-1)^{\# J}$ when $T'=T \bubble \pi_{w_0(J)}$.

\begin{example} \rm \ \\
An interesting example is
the non-lattice lower interval $[\hat{0},y]$ on the left in Figure~\ref{f1},
which has the order complex $\Delta(\hat{0},y)$ of its open interval 
homotopy equivalent to a $1$-sphere (circle).  Meanwhile,
its subinterval $[x,y]$  shown to its right has $\Delta(x,y)$ contractible.
\end{example}

%%%%%%%%%%%%%%%%%%%%%%%%%%%%%%%%

\end{document}